\def\Binom{{\rm Binom}}
\def\le{\leqslant}
\def\ge{\geqslant}
\def\dd#1{{\,\rm d}#1}
\def\ve{\varepsilon}
\def\tr#1{\left\lfloor #1\right\rfloor}
\numberwithin{equation}{section}
\theoremstyle{plain}
\newtheorem{thm}{Theorem}[section]
\newtheorem{cor}[thm]{Corollary}
\newtheorem{lem}[thm]{Lemma}
\newtheorem{prop}[thm]{Proposition}
\title{Analysis of an exhaustive search algorithm in random graphs
and the $n^{c\log n}$-asymptotics}
\author{Cyril Banderier \\
    LIPN, Institut Galil\'ee\\
    Universit\'e\ Paris 13\\
    93430, Villetaneuse\\ France
\and Hsien-Kuei Hwang\thanks{Part of the work of this author was
done while visiting ISM (Institute of Statistical Mathematics),
Tokyo; he thanks ISM for its hospitality and support.} \\
    Institute of Statistical Science, \\
    Institute of Information Science\\
    Academia Sinica\\
    Taipei 115\\
    Taiwan
\and Vlady Ravelomanana \\
    LIAFA, UMR CNRS 7089\\
    Universit\'e\ Denis Diderot \\
    75205, Paris Cedex 13\\
    France
\and Vytas Zacharovas \\
    Dept. Mathematics \&\ Informatics\\
    Vilnius University\\
  Naugarduko 24, Vilnius\\
    Lithuania}
\date{\today}
\begin{document}
\maketitle


\begin{abstract}

We analyze the cost used by a naive exhaustive search algorithm for
finding a maximum independent set in random graphs under the usual
$\mathscr{G}_{n,p}$-model where each possible edge appears
independently with the same probability $p$. The expected cost turns
out to be of the less common asymptotic order $n^{c\log n}$, which
we explore from several different perspectives. Also we collect many
instances where such an order appears, from algorithmics to
analysis, from probability to algebra. The limiting distribution of
the cost required by the algorithm under a purely idealized random
model is proved to be normal. The approach we develop is of some
generality and is amenable for other graph algorithms.

\end{abstract}
\noindent MSC 2000 Subject Classifications: Primary 05C80, 05C85; secondary
65Q30.

\noindent Key words: random graphs, maximum independent set, depoissonization, exhaustive search algorithm, recurrence relations, method of moments, Laplace transform.

%
%
\section{Introduction}

An \emph{independent set} or \emph{stable set} of a graph $G$ is a
subset of vertices in $G$ no two of which are adjacent. The
\emph{Maximum Independent Set (MIS) Problem} consists in finding an
independent set with the largest cardinality; it is among the first
known NP-hard problems and has become a fundamental, representative,
prototype instance of combinatorial optimization and computational
complexity; see \cite{garey79a}. A large number of algorithms (exact
or approximate, deterministic or randomized), as well as many
applications, have been studied in the literature; see
\cite{bomze99a,fomin10a,woeginger03a} and the references therein for
more information.

The fact that there exist several problems that are essentially
equivalent (including maximum clique and minimum node cover) adds
particularly further dimensions to the algorithmic aspects and
structural richness of the problem. Also worthy of special mention
is the following interesting polynomial formulation (see
\cite{abello01a,harant00a})
\[
    \alpha(G) = \max_{(x_1,\dots,x_n)\in[0,1]^n}
    \left(\sum_{1\le i\le n}x_i-\sum_{(i,j)\in E}
    x_ix_j\right),
\]
where $\alpha(G)$ denotes the cardinality of an MIS of $G$ (or the
stability number) and $E$ is the set of edges of $G$. Such an
expression is easily coded, albeit with an exponential complexity.
The algorithmic, theoretical and practical connections of many other
formulations similar to this one have also been widely discussed;
see \cite{abello01a}.

One simple means to find an MIS of a graph $G$ is the following
exhaustive (or branching or enumerative) algorithm. Start with any
node, say $v$ in $G$. Then either $v$ is in an MIS or it is not.
This leads to the recursive decomposition
\begin{align}\label{alphaG}
    \alpha(G)= \max\biggl\{
    \underbrace{\alpha\left(G \setminus\{v\}\right)}_
    {v\not\in \text{MIS}(G)}, \underbrace{
    1+\alpha\left(G\setminus N^*(v)\right)}_{
    v\in\text{MIS}(G)}\biggr\},
\end{align}
where $\text{MIS}(G)$ denotes an MIS of $G$ and $N^*(v)$ denotes the
union of $v$ and all its neighbors. Such a simple procedure leads to
many refined algorithms in the literature, including alternative
formulations such as backtracking (see \cite{wilf02a}) or branch and
bound (see \cite{fomin10a}).

Tarjan and Trojanowski \cite{tarjan77a} proposed an improved
exhaustive algorithm with worst-case time complexity $O(2^{n/3})$.
Their paper was followed and refined by many since then; see
\cite{bomze99a,woeginger03a} and \cite{fomin10a} for more
information and references. In particular, Chv\'atal
\cite{chvatal77a} generalized Tarjan and Trojanowski's algorithm and
showed \emph{inter alia} that for almost all graphs with $n$ nodes,
a special class of algorithms (which he called order-driven) has time
bound $O(n^{c_0\log n+2})$, where $c_0:=2/\log 2$. He also
characterized exponential algorithms and conjectured that a similar
bound of the form $O(n^{c\log n})$ holds for a wider class of
recursive algorithms for some $c>0$. Pittel \cite{pittel82a} then
refined Chv\'atal's bounds by showing that, under the usual
$\mathscr{G}_{n,p}$-model (namely, \emph{each pair of nodes has the
same probability $p\in(0,1)$ of being connected by an edge, and one
independent of the others}), the cost of Chv\'atal's algorithms
(called $f\!$-driven, more general than order-driven) is bounded
between $n^{(\frac14-\ve)\log_\kappa n}$ and
$n^{(\frac12+\ve)\log_\kappa n}$ with high probability, for any
$\ve>0$, where $q := 1-p$ and $\kappa := 1/q$.

The infrequent scale $n^{c\log n}=e^{c(\log n)^2}$ is central to our
study here and can be seen through several different angles that
will be examined in the following paragraphs. The simplest
algorithmic connection to MIS problem is via the following argument.
It is well-known that for any random graph $G$ (under the
$\mathscr{G}_{n,p}$-model), the value of $\alpha(G)$ is highly
concentrated for fixed $p\in(0,1)$, namely, there exists a sequence
$m_n$ such that $\alpha(G) = m_n$ or $\alpha(G)=m_n+1$ with high
probability; see \cite{bollobas01a}. Asymptotically ($\kappa :=
1/q$),
\[
    m_n = 2\log_{\kappa} n -2\log_{\kappa}\log_{\kappa}n +O(1) .
\]
For more information on this and related estimates, see
\cite{bollobas01a} and the references therein. Thus a simple
randomized (approximate) MIS-finding algorithm consists in examining
all possible
\[
    \binom{n}{m_n}+
    \binom{n}{m_n+1} = O\left(n^{2\log_{\kappa}n}\right)
\]
subsets and determining if at least one of them is independent;
otherwise (which happens with very small probability; see
\cite{bollobas01a}), we resort to exhaustive algorithms such as that
discussed in this paper.

From a different algorithmic viewpoint, Jerrum \cite{jerrum92a}
studied the following Metropolis algorithm for maximum clique.
Sequentially increase the clique, say $K$ by (\emph{i}) choose a
vertex $v$ uniformly at random; (\emph{ii}) if $v\not\in K$ and $v$
is connected to every vertex of $K$, then add $v$ to $K$;
(\emph{iii}) if $v \in K$, then $v$ is subtracted from $K$ with
probability $\Lambda^{-1}$. He proved that for all $\Lambda\ge1$,
there exists an initial state from which the expected time for the
Metropolis process to reach a clique of size at least $(1 + \ve)
\log_\kappa(pn)$ exceeds $n^{\Omega(\log pn)}$. See
\cite{coja-oghlan11a} for an account of more recent developments on
the complexity of the MIS problem.

We aim in this paper at a more precise analysis of the cost used by
the simple recursive, exhaustive algorithm implied by
\eqref{alphaG}. The exact details of the algorithm matter less and
the overall cost is dominated by the total number of recursive
calls, denoted by $X_n$, which is a random variable under the same
$\mathscr{G}_{n,p}$-model. Then the mean value $\mu_n :=
\mathbb{E}(X_n)$ satisfies
\begin{align} \label{mun-rr}
    \mu_n=\underbrace{\mu_{n-1}}_{v\not\in\text{MIS}(G)}
    +\underbrace{\sum_{0\le k<n}
    \pi_{n,k}\mu_k}_{v\in\text{MIS}(G)},
\end{align}
for $n\ge 2$, with the initial conditions $\mu_0=0$ and $\mu_1=1$,
where
\[
    \pi_{n,k} := \mathbb{P}
    (v \text{ has } n-1-k \text{ neighbors})
    = \binom{n-1}{k} p^{n-1-k} q^k.
\]

How fast does $\mu_n$ grow as a function of $n$? (\emph{i}) If $p$
is close to 1, then the graph is very dense and thus the sum in
\eqref{mun-rr} is small (many nodes being removed), so we expect a
polynomial time bound by simple iteration; (\emph{ii}) If $p$ is
sufficiently small, then the second term is large, and we expect an
exponential time bound; (\emph{iii}) What happens for $p$ in
between? In this case the asymptotics of $\mu_n$ turns out to be
nontrivial and we will show that
\begin{align} \label{mun-ae}
    \log \mu_n
    = \frac{\left(\log \frac{n}{\log_\kappa n}\right)^2}
    {2\log \kappa }
    +\left(\tfrac12+\tfrac1{\log \kappa }\right)\log n-\log\log n
    + P_0\left(\log_\kappa \tfrac{n}{\log_\kappa n} \right)+o(1),
\end{align}
where $P_0(t)$ is a bounded, periodic function of period $1$. We
will give a precise expression for $P_0$. Note that
\begin{align} \label{mun-nlogn}
    \frac{\mu_n}{n^{\frac12\log_\kappa n}}
    \asymp \frac{(\log n)^{\frac12\log_\kappa\log n-1
    -\frac{\log\log\kappa}{\log \kappa}}}
    {n^{\log_\kappa \log n -\frac12
    -\frac1{\log\kappa}-\frac{\log\log\kappa}
    {\log\kappa}}}\ll n^{-K}\to0,
\end{align}
for any $K>0$, where the symbol $a_n \asymp b_n$ means that $a_n$
and $b_n$ are asymptotically of the same order. Thus $\mu_n =
o\left(n^{\frac12\log_\kappa n-K}\right)$. On the other hand, the
asymptotic pattern \eqref{mun-ae} is to some extent generic, as we
will see below.

An intuitive way to see why we have the asymptotic form
\eqref{mun-ae} for $\log\mu_n$ is to look at the simpler functional
equation
\begin{align} \label{nux}
    \nu(x) = \nu(x-1) + \nu(qx),
\end{align}
since the binomial distribution is highly concentrated around its
mean value $pn$, and we expect that $\mu_n \approx \nu(n)$ (under
suitable initial conditions). This functional equation and the like
(such as $\nu_n=\nu_{n-1}+\nu_{\tr{qn}}$) has a rich literature.
Most of them are connected to special integer partitions; important
pointers are provided in Encyclopedia of Integer Sequences; see for
example \href{http://oeis.org/A000123}{A000123},
\href{http://oeis.org/A002577}{A002577},
\href{http://oeis.org/A005704}{A005704},
\href{http://oeis.org/A005705}{A005705}, and
\href{http://oeis.org/A005706}{A005706}. In particular, it is
connected to partitions of integers into powers of $\kappa=1/q\ge2$
when $\kappa$ is a positive integer; see
\cite{debruijn48a,fredman74a,mahler40a}. It is known that (under
suitable initial conditions)
\begin{align}\label{log-nux}
    \log \nu(x) = \frac{\left(\log \frac{x}{\log_\kappa x}\right)^2}
    {2\log \kappa }
    +\left(\tfrac12+\tfrac1{\log \kappa }\right)\log x-\log\log x
    + P_1\left(\log_\kappa \tfrac{x}{\log_\kappa x}\right)+o(1),
\end{align}
for large $x$, where $P_1(t)$ is a bounded $1$-periodic function;
see \cite{debruijn48a,dumas96a}. Thus
\[
    |\log\mu_n - \log\nu(n)| =
    \left|P_0\left(\log_\kappa \tfrac{x}{\log_\kappa x}\right)
    -P_1\left(\log_\kappa \tfrac{x}{\log_\kappa x}\right)\right|+o(1).
\]
We see that approximating the binomial distribution in
\eqref{mun-rr} by its mean value
\[
    \mathbb{E}(\mu_{n-1-\Binom(n-1;p)})
    \approx \mu_{n-1-\mathbb{E}(\Binom(n-1;p))}
    \approx \mu_{\tr{qn}}
\]
gives a very precise estimate, where $\Binom(n-1;p)$ denotes a
binomial distribution with parameters $n-1$ and $p$.

An even simpler way to see the dominant order $x^{c\log x}$ is to
approximate \eqref{nux} by the \emph{delay differential equation}
(since $\nu(x)-\nu(x-1)\approx \nu'(x)$ for large $x$)
\begin{align} \label{omegax}
    \omega'(x) = \omega(qx),
\end{align}
which is a special case of the so-called \emph{``pantograph
equations''}
\[
    \omega'(x) = a\omega(qx) + b\omega(x),
\]
originally arising from the study of current collection systems for
electric locomotives; see \cite{iserles93a,kato71a,ockendon71a}.
Since the usual polynomial or exponential functions fail to satisfy
\eqref{omegax}, we try instead a solution of the form $\omega(x) =
x^{c\log x}$; then $c$ should be chosen to satisfy the equation
\[
    x^{1-2c\log\kappa} = 2c e^{c(\log\kappa)^2} \log x.
\]
So we should take $c=1/(2\log\kappa) + O(x^{-1}\log x)$. This gives
the dominant term $\frac{(\log x)^2}{2\log\kappa}$ for
$\log\omega(x)$. More precise asymptotic solutions are thoroughly
discussed in \cite{debruijn53a,kato71a}. In particular, all
solutions of the equation $\omega'(x) = a\omega(qx)$ with $a>0$
satisfies
\begin{align*}
    \log \omega(x)
    &= \frac{\left(\log \frac{x}{\log_\kappa x}\right)^2}
    {2\log \kappa }
    +\left(\tfrac12+\tfrac{1}{\log\kappa}
    +\tfrac{\log a}{\log\kappa}
    \right)\log x -\left(1+
    \tfrac{\log a}{\log\kappa}\right)\log\log x \\
    &\qquad + P_2
    \left(\log_\kappa \tfrac{x}{\log_\kappa x} \right)+o(1),
\end{align*}
for large $x$, where $P_2(t)$ is a bounded $1$-periodic function. We
see once again the generality of the asymptotic pattern
\eqref{mun-ae}.

On the other hand, the function
\[
    \varpi(x) := \exp\left(\frac{\left(\log (x/\sqrt{q})\right)^2
    }{2\log(1/q)}\right)
\]
satisfies the $q$-difference equation
\[
    \varpi(x) = x\varpi(qx),
\]
and is a fundamental factor in the asymptotic theory of
$q$-difference equations; see the two survey papers
\cite{adams31a,di-vizio03a} and the references therein. This
equation will also play an important role in our analysis.

From yet another angle, one easily checks that the series
\begin{align*}
    M(x) := \sum_{j\ge0} \frac{q^{\binom{j}2}}{j!}\,x^j
\end{align*}
satisfies the equation \eqref{omegax}. The largest term occurs, by
simple calculus, at
\[
    j\approx
    \log_\kappa x - \log_\kappa\log_\kappa x + \tfrac12 +o(1),
\]
and, by the analytic approach we use in this paper, we can deduce
that the logarithm of the series is, up to an error of $O(1)$, of
the same asymptotic order as $\log \nu(x)$; see \eqref{log-nux} and
Section~\ref{sec:cr}. The function $M(x)$ arises sporadically in
many different contexts and plays an important r\^{o}le in the
corresponding asymptotic estimates; see below for a list of some
representative references.

A closely related sum arises in the average-case analysis of a
simple backtracking algorithm (see \cite{wilf02a}), which
corresponds to the expected number of independent sets in a random
graph (or, equivalently, the expected number of cliques by
interchanging $q$ and $p$)
\begin{align} \label{Jn}
    J_n := \sum_{1\le j\le n} \binom{n}{j} q^{j(j-1)/2},
\end{align}
see \cite{matula70a,wilf02a}. Wilf \cite{wilf02a} showed that $J_n =
O(n^{\log n})$ when $p=1/2$. While such a crude bound is easily
obtained, the more precise asymptotics of $J_n$ is more involved.
First, it is straightforward to check that $J_n \sim M(n)$ for large
$n$. Second, the approach we develop in this paper can be used to
show that $J_n$ has an asymptotic expansion similar to
\eqref{mun-ae}. Indeed, it is readily checked that $J_n+1$ satisfies
the same recurrence relation as $\mu_n$ with different initial
conditions. So the asymptotics of $J_n$ follows the same pattern
\eqref{mun-ae} as that of $\mu_n$; see Section~\ref{sec:cr} for more
details.

Thus examining all independent sets one after another in the
backtracking style of Wilf \cite{wilf02a} and identifying the one
with the maximum cardinality also leads to an expected $n^{c\log
n}$-complexity.

The diverse aspects we discussed of algorithms or equations leading
to the scale $n^{c\log n}$ are summarized in Figure~\ref{fig:nlogn}.
The bridge connecting the algorithms and the analysis is the
binomial recurrence \eqref{mun-rr} as explained above.

\newcommand{\rewidth}
{
\begin{tikzpicture}[scale=0.65,mindmap,
root concept/.append style={minimum size=2cm},
level 1 concept/.append style={level distance=135,
sibling angle=30,minimum size=1cm}]

\begin{scope}[mindmap,text=white]
\node [concept,left=0.4cm,concept color=green!50!black!90,text width=2cm]
{\footnotesize MIS-finding algorithms
\\ \& $n^{c\log n}$ $(x^{c\log x})$}
child [grow=0,concept color=blue!70]
{node [concept] {\tiny Exhaustive algorithms
$a_{n}-a_{n-1}=\sum\limits_{0\le k<n}
\pi_{n,k} a_k$}}
child [grow=60,concept color=blue!70]
{node [concept] {\tiny Randomized algorithms \\
$\binom{n}{\lfloor c\log n\rfloor}$}}
child [grow=300,concept color=blue!70]
{node [concept] {\tiny Backtracking algorithms
$\sum\limits_{1\le k\le n}\binom{n}{k} q^{\binom{k}{2}}$}}
child [grow=120,concept color=red!75]
{node [concept] {\tiny Mahler's partitions\\
$a_n - a_{n-1} $ $= a_{\lfloor qn\rfloor}$}}
child [grow=180,concept color=red!75]
{node [concept] {\tiny Pantograph equations\\
$f'(x)= af(qx)+bf(x)$}}
child [grow=240,concept color=red!75]
{node [concept] {\tiny $q$-difference equations
$f(x) = xf(qx)$}};
\end{scope}
\end{tikzpicture}
}

\begin{figure}
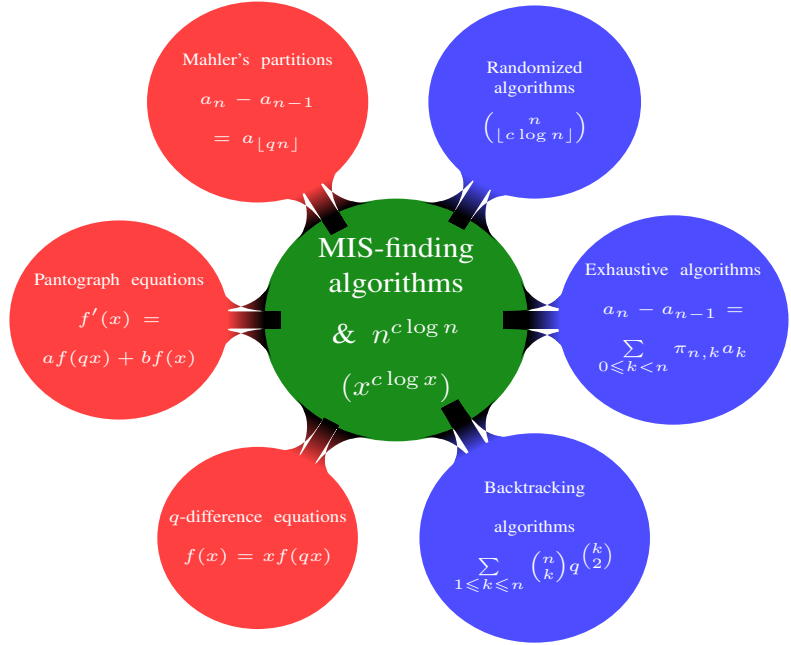

\begin{center}
\resizebox{10.5cm}{8.5cm}{\rewidth}
\end{center}
\label{fig:nlogn}
\caption{\emph{The connection between MIS-finding algorithms
and the scale $n^{c\log n}$ (discrete) or $x^{c\log x}$ (continuous).
The circles on the right-hand side are more algorithmic in nature,
while those on the left-hand side more analytic in nature.}}
\end{figure}

This paper is organized as follows. We derive in the next section an
asymptotic expansion for $\mu_n$ using a purely analytic approach.
The interest of deriving such a precise asymptotic approximation is
at least fourfold.
\begin{description}

\item[\qquad Asymptotics:] It goes much beyond the crude description
$n^{c\log n}$ and provides a more precise description; see
particularly \eqref{mun-nlogn} and its implication mentioned
there. Indeed, few papers in the literature address such an
aspect; see
\cite{debruijn48a,debruijn53a,dumas96a,kato71a,pennington53a,richmond76a}.

\item[\qquad Numerics:] All scales involved in problems of similar
nature here are expressed either in $\log$ or in $\log\log$, making
them more subtle to be identified by numerical simulations. The inherent
periodic functions and the slow convergence further add to the
complications.

\item[\qquad Methodology:] Our approach, different from previous ones
that rely on explicit generating functions in product forms, is
based on the underlying functional equation and is of some
generality; it is akin to some extent to Mahler's analysis in
\cite{mahler40a}.

\item[\qquad Generality:] The asymptotic pattern \eqref{mun-ae} is of
some generality, an aspect already examined in details in
several papers; see for example
\cite{debruijn53a,dumas96a,kato71a}. See also the last section
for a list of diverse contexts where the order $n^{c\log n}$
appears.
\end{description}

Alternative approaches leading to different asymptotic expansions
are discussed in Section~\ref{sec:aa}.

The next curiosity after the expected value is the variance. But due
to strong dependence of the subproblems, the variance is quite
challenging at this stage. We consider instead an idealized
\emph{independent} version of $X_n$ (the total cost of the
exhaustive algorithm implied by \eqref{alphaG}), namely
\begin{align} \label{Yn-rr}
    Y_n \stackrel{d}{=} Y_{n-1} + Y^*_{n-1-\Binom(n-1;p)}
    \qquad(n\ge2),
\end{align}
with $Y_1:=1$ and $Y_0:=0$, where ``$\stackrel{d}{=}$" stands for
equality in distribution, $Y_n^*$ is an identical copy of $Y_n$ and
the two terms on the right-hand side are \emph{independent}. The
original random variable $X_n$ satisfies the same distributional
recurrence but with the two terms ($X_{n-1}$ and $X^*_{n-1-
\Binom(n-1;p)}$) on the right-hand side \emph{dependent}. We expect
that $Y_n$ would provide an insight of the possible stochastic
behavior of $X_n$ although we were unable to evaluate their
difference. We show, by a method of moments, that $Y_n$ \emph{is
asymptotically normally distributed} in addition to deriving an
asymptotic estimate for the variance. Monte Carlo simulations for
$n$ up to a few hundreds show that the limiting distribution of
$X_n$ seems likely to be normal, although the ratio between its
variance and that of $Y_n$ grows like a concave function. But the
sample size $n$ is not large enough to provide more convincing
conclusions from simulations.

Once the asymptotic normality of $Y_n$ is clarified, a natural
question then is the limit law of the random variables (by changing
the underlying binomial to uniform distribution)
\begin{align} \label{Zn-rr}
    Z_n \stackrel{d}{=} Z_{n-1} + Z_{\text{Uniform}(0,n-1)}
    \qquad(n\ge2),
\end{align}
with $Z_0=0$ and $Z_1=1$. In this case, we prove that the mean is
asymptotic to $cn^{-1/4}e^{2\sqrt{n}}$ and the limit law is \emph{no
more normal}. We conclude this paper with a few remarks and a list
of many instances where $n^{c\log n}$ arises, further clarifications
and connections being given elsewhere.

\medskip

\noindent \textbf{Notations}. Throughout this paper, $0<p<1$,
$q=1-p$, and $\kappa = 1/q$.

\section{Expected cost}\label{sec:ec}

We derive asymptotic approximations to $\mu_n$ in this section by an
analytic approach, which is briefly sketched in Figure~\ref{fig-pf}.

\subsection{Preliminaries and main result}
Recall that $X_n$ denotes the cost used by the exhaustive search
algorithm (implied by \eqref{alphaG}) for finding an MIS in a random
graph, and it satisfies the recurrence
\begin{align}
    X_n \stackrel{d}{=} X_{n-1} + X^*_{n-1-\Binom(n-1;p)},
    \label{Xn-rr}
\end{align}
with $X_0=0$ and $X_1=1$, where $X_n^*\stackrel{d}{=} X_n$, and the
two terms on the right-hand side are dependent.

From (\ref{Xn-rr}), we see that the expected value $\mu_n$ of $X_n$
satisfies the recurrence \eqref{mun-rr}. Our analytic approach then
proceeds along the line depicted in Figure~\ref{fig-pf}. While the
approach appears standard (see
\cite{flajolet09a,jacquet98a,szpankowski01a}), the major difference
is that instead of Mellin transform, we need Laplace transform since
the quantity in question is not polynomially bounded. Also the
diverse functional equations are crucial in our analysis, notably
for the purpose of justifying the de-Poissonization, which differs
from previous ones; see \cite{jacquet98a,szpankowski01a}.

\tikzstyle{block} = [rectangle, draw, fill=none, line width = 0.8pt,
text width=5em, rounded corners, minimum height=4em]
\tikzstyle{line} = [draw, -latex',line width = 1pt]
\begin{figure}[!h]
\begin{center}{\footnotesize
\begin{tikzpicture}[auto]
\node [block, text width=4.5cm, rectangle split,rectangle split parts=2,
text centered] (b1)
{
Recurrence relation
\nodepart{second}
{
$\displaystyle\mu_n = \mu_{n-1}+ \sum\nolimits_k \pi_{n,k} \mu_k$
}
};

\node [block, text width=4.5cm, right=4ex of b1, node distance=2.5cm,
rectangle split, rectangle split parts=2, text centered] (b2)
{
Poisson generating function
\nodepart{second}
{
{$\tilde{f}'(z)=\tilde{f}(qz)+e^{-z}$}
}
};

\node [block, text width=4.5cm, below=4ex of b1, node distance=2.5cm,
rectangle split, rectangle split parts=2, text centered] (b3)
{
Poisson-Charlier expansion
\nodepart{second}
{
$\displaystyle
\mu_{n}\sim \tilde{f}(n)-\frac{n}{2}\tilde{f}''(n)$
}
};

\node [block, text width=4.5cm, below=4.3ex of b2, node distance =2.5cm,
rectangle split, rectangle split parts=2, text centered] (b4)
{
Modified Laplace transform
\nodepart{second}
{
$\displaystyle
\tilde{f}^{*}(s)=s\tilde{f}^{*}(qs)+\frac{s}{1+s}$
}
};

\node [block, text width=4.5cm, below =4ex of b3, node distance = 2.5cm,
rectangle split, rectangle split parts=2, text centered] (b5)
{
de-Poissonization
\nodepart{second}
{
$\displaystyle
\mu_{n} =\frac{n!}{2\pi i}\oint z^{-n-1}e^{z}
\tilde{f}(z)\,\text{d} z$
}
};

\node [block, text width=4.5cm, below =4 ex of b4, node distance = 2.5cm,
rectangle split, rectangle split parts=2, text centered] (b6)
{
Inverse transform
\nodepart{second}
{
$\displaystyle\tilde{f}(x)=\frac{1}{2\pi i}
\int \frac{e^{xs}}{s}\tilde{f}^{*}(s)\,\text{d} s$
}
};
\draw [line] (b1) -- (b2);
\draw [line] (b2) -- (b4);
\draw [line] (b4) -- (b6);
\draw [line] (b6) -- (b5);
\draw [line] (b5) -- (b3);
\end{tikzpicture}}
\end{center}\label{fig-pf}
\caption{\emph{Our analytic approach to the asymptotics of
$\mu_n$. Here $\pi_{n,k}:= \binom{n-1}{k}q^{k}p^{n-1-k}$.}}
\end{figure}
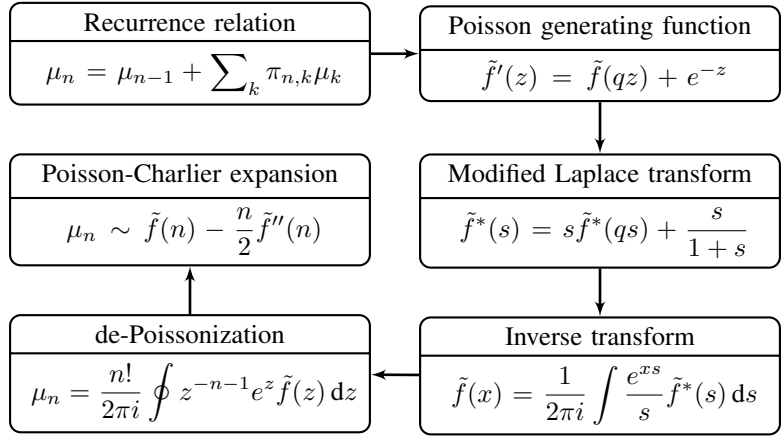

\paragraph{Generating functions (GFs).}
Let $f(z):=\sum_{n\ge0}\mu_n z^n/n!$ denote the exponential GFs of
$\mu_n$. Then $f$ satisfies, by \eqref{mun-rr}, the equation
\[
    f'(z)=1+f(z)+e^{pz}f(qz),
\]
with $f(0)=0$, or, equivalently, denoting by $\tilde{f}(z)
:=e^{-z}f(z)$ the Poisson GF of $\mu_n$,
\begin{align}
    \tilde{f}'(z)=\tilde{f}(qz)+e^{-z},
    \label{tfz-fe}
\end{align}
with $\tilde{f}(0)=0$.

\paragraph{Closed-form expressions.}
Let $\tilde{f}(z)=\sum_{n\ge0} \tilde{\mu}_n z^n/n!$. From the
$q$-differential equation \eqref{tfz-fe}, we derive the recurrence
\[
    \tilde{\mu}_{n+1} = q^n \tilde{\mu}_n +(-1)^n
    \qquad(n\ge1).
\]
By iteration, we then obtain the closed-form expression
\begin{align*}
    \tilde{\mu}_n = \sum_{0\le j<n}(-1)^j
    q^{(n-1-j)(n+j)/2}\qquad(n\ge1).
\end{align*}
Since $f(z) = e^z\tilde{f}(z)$, we then have
\begin{align} \label{mun-closed}
    \mu_n = \sum_{1\le k\le n}\binom{n}{k}
    \sum_{0\le j<k} (-1)^j q^{(k-1-j)(k+j)/2}
    \qquad(n\ge1).
\end{align}
This expression is, although exact, less useful for large $n$; also
its asymptotic behavior remains opaque. See also \eqref{mun-ce2} for
another closed-form expression for $\mu_n$.

\paragraph{Asymptotic approximations.}
Our aim in this section is to derive the following asymptotic
approximation.

\begin{thm} \label{mun-asymp}
The expected cost $\mu_n$ of the exhaustive search on a random graph
satisfies
\begin{equation}\label{mun-ae2}
    \mu_n= \frac{G\left(\log_\kappa\frac{n}
    {\log_\kappa n}\right)}{\sqrt{2\pi}}
    \cdot\frac{n^{1/\log\kappa+1/2}}{\log_\kappa n}
    \,\exp\left(\frac{\left(
    \log \frac{n}{\log_\kappa n}\right)^2}{2\log\kappa}\right)
    \left(1+O\left(\frac{(\log\log n)^2}{\log n}\right)\right),
\end{equation}
as $n\to\infty$, where $G(u)$ is defined by ($\{u\}$ being the
fractional part of $u$)
\[
    G(u) = q^{(\{u\}^2-\{u\})/2}\sum_{j\in\mathbb{Z}}
    \frac{q^{j(j+1)/2}}{1+q^{j-\{u\}}}\, q^{-j\{u\}},
\]
(see \eqref{Gu}) and is a bounded, $1$-periodic function of $u$.
\end{thm}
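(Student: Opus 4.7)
The plan follows the six-step scheme depicted in Figure~\ref{fig-pf}. Starting from the $q$-differential-difference equation \eqref{tfz-fe} for the Poisson generating function $\tilde{f}$ (already derived from the recurrence \eqref{mun-rr}), I would apply the modified Laplace transform shown in the figure, which converts \eqref{tfz-fe} into the cleaner $q$-functional equation $\tilde{f}^{*}(s)=s\tilde{f}^{*}(qs)+s/(1+s)$. Iterating this equation and checking that the remainder $q^{k(k-1)/2}s^{k}\tilde{f}^{*}(q^{k}s)$ vanishes as $k\to\infty$ (uniformly on any region bounded away from the poles, using the local regularity of $\tilde{f}^{*}$ at the origin) yields the closed form
\[
    \tilde{f}^{*}(s)=\sum_{j\ge0}\frac{q^{j(j+1)/2}s^{j+1}}{1+q^{j}s}.
\]
The simple poles $s=-q^{-j}$ are $q$-geometrically spaced; this spacing is the analytic seed for the periodic fluctuation $G(u)$ on the $\log_{\kappa}$-scale that will eventually appear in \eqref{mun-ae2}.

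Second, I would invert the transform via the contour integral
\[
    \tilde{f}(x)=\frac{1}{2\pi i}\int\frac{e^{xs}}{s}\tilde{f}^{*}(s)\dd{s},
\]
and analyze it asymptotically for large real $x$. Pushing the contour to the left and collecting residues at $s=-q^{-j}$ contributes terms of order $q^{-j(j+1)/2}e^{-xq^{-j}}$ (up to algebraic factors), and the two competing factors balance at the saddle index $j^{*}\approx\log_{\kappa}(x/\log_{\kappa}x)$. A Laplace/saddle-point argument in the discrete variable $j$ then produces simultaneously the Gaussian exponential $\exp\bigl((\log(x/\log_{\kappa}x))^{2}/(2\log\kappa)\bigr)$, the algebraic prefactor $x^{1/\log\kappa+1/2}/\log_{\kappa}x$, and the bounded $1$-periodic function $G(u)$; the stated form of $G$ arises by reindexing $j\mapsto j+\tr{u}$ with $u=\log_{\kappa}(x/\log_{\kappa}x)$ and letting the window shift with $\{u\}$, which turns the truncated residue sum into a two-sided theta-like sum over $\mathbb{Z}$.

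The final step is de-Poissonization. Because $\tilde{f}(n)$ grows super-polynomially, the classical analytic de-Poissonization of Jacquet--Szpankowski (whose admissibility conditions require polynomial growth) does not apply directly. Instead, I would use the short Poisson--Charlier expansion $\mu_{n}\sim\tilde{f}(n)-\tfrac{n}{2}\tilde{f}''(n)+\cdots$ indicated in Figure~\ref{fig-pf}, justified by starting from $\mu_{n}=\frac{n!}{2\pi i}\oint z^{-n-1}e^{z}\tilde{f}(z)\dd{z}$ and splitting the circle $|z|=n$ into a central arc where $\tilde{f}(z)$ is well-approximated by its real-axis asymptotic and a tail where its derivatives are controlled via \eqref{tfz-fe} and the auxiliary $q$-difference equation $\varpi(x)=x\varpi(qx)$ (which isolates the dominant super-polynomial factor). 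The main obstacle of the whole proof lies precisely here: obtaining uniform bounds on $\tilde{f}$ and its first two derivatives in a complex neighborhood of $z=n$ that are sharp enough to absorb all correction terms of the Poisson--Charlier truncation into the relative error $O((\log\log n)^{2}/\log n)$, in spite of the $n^{c\log n}$ growth. This is the refined de-Poissonization step announced in the text as differing from \cite{jacquet98a,szpankowski01a}, and once it is carried through, the theorem follows by setting $x=n$ in the asymptotics of $\tilde{f}$ obtained in the previous paragraph.
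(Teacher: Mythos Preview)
Your overall architecture---Poisson generating function, modified Laplace transform, the closed form $\tilde f^{\star}(s)=\sum_{j\ge0}q^{j(j+1)/2}s^{j+1}/(1+q^{j}s)$, inverse transform, then Poisson--Charlier de-Poissonization---is exactly the paper's, and your description of the last step is close in spirit to what is actually done (Taylor-expand $\tilde f$ at $z=n$, iterate \eqref{tfz-fe} to rewrite $\tilde f^{(4)}$ in terms of $\tilde f(q^{4}z)$, and bound the remainder uniformly on $|z|=n$ via $|f(z)|\le f(|z|)$; no arc splitting is needed).

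The gap is in your second paragraph, where you propose to push the contour left and sum residues. This mechanism does not produce the dominant asymptotics. With the correct inversion the integrand contains $\tilde f^{\star}(1/s)$ (not $\tilde f^{\star}(s)$), whose poles $s=-q^{j}$, $j\ge0$, accumulate at the essential singularity $s=0$; the contour cannot be deformed past all of them and the formal residue series diverges. With the integrand as you wrote it (poles at $s=-q^{-j}$), each residue carries a factor $e^{-x\kappa^{j}}$, and an elementary check shows that $\kappa^{j(j+1)/2}e^{-x\kappa^{j}}$ is strictly decreasing in $j\ge0$ (its logarithmic derivative is $(\log\kappa)(j+\tfrac12-x\kappa^{j})<0$), so the maximum over $j\ge0$ is $e^{-x}$, not anything near $x^{c\log x}$; no balance at $j^{*}\approx\log_{\kappa}(x/\log_{\kappa}x)$ takes place. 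In either reading, the growth of $\tilde f(x)$ is not encoded in residues of $\tilde f^{\star}$ but in the saddle point of the inversion integral itself.

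What the paper does instead is to complete the one-sided series to the bilateral $F(s)=\sum_{j\in\mathbb Z}q^{j(j+1)/2}s^{j+1}/(1+q^{j}s)$, which satisfies the exact theta-type relation $F(s)=sF(qs)$; iterating this $N=\lfloor\log_{\kappa}(1/r)\rfloor$ times gives the closed evaluation $F(1/r)=r^{-1/2}\exp\bigl((\log(1/r))^{2}/(2\log\kappa)\bigr)G(\log_{\kappa}(1/r))$ with $G$ bounded and $1$-periodic. The inversion integral \eqref{fx-Lif} is then estimated by a genuine continuous saddle-point method at the $r$ solving $r^{-1}\log r^{-1}=x\log\kappa$, after localising via the bounds \eqref{f1}--\eqref{f3}. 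The periodicity of $G$ comes from $N$ being an integer part of $\log_{\kappa}(1/r)$, not from a residue reindexing. This functional-equation/saddle-point step is what should replace your residue argument; the auxiliary relation $\varpi(x)=x\varpi(qx)$ you cite is exactly $F(s)=sF(qs)$ and belongs here, not in the de-Poissonization.
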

Note that \eqref{mun-ae2} implies \eqref{mun-ae} with
\[
    P_0(u) = -\tfrac12\log 2\pi -\log \kappa + \log G(u).
\]
Our approach leads indeed to an asymptotic expansion, but we content
ourselves with the statement of \eqref{mun-ae2}; see
\eqref{tfx-ae2}, \eqref{mun-asymp2} and \eqref{tfx-aae}.

The function $f$ (and thus $\tilde{f}$) is an entire function. It
follows immediately that we have the identity (see \cite{hwang10a})
\[
    \mu_n = \sum_{j\ge0} \frac{\tilde{f}^{(j)}(n)}{j!}
    \,\tau_j(n),
\]
(referred to as the Poisson-Charlier expansion in \cite{hwang10a})
where the $\tau_j(n)$'s are polynomials of $n$ of degree $\tr{n/2}$;
see \eqref{tau-jn}. See also \cite{jacquet98a} for different
representations. However, the hard part is often to
justify the \emph{asymptotic nature} of the expansion, namely,
\[
    \mu_n = \sum_{0\le j<J} \frac{\tilde{f}^{(j)}(n)}{j!}
    \,\tau_j(n) + O\left(n^{\tr{J/2}} \tilde{f}^{(J)}(n)\right),
\]
for $J=2,3,\dots$. In particular, the first-order asymptotic
equivalent ``$\mu_n \sim \tilde{f}(n)$" is often called the
\emph{Poisson heuristic}. Thus the asymptotics of $\mu_n$ is reduced
to that of $\tilde{f}(x)$ once we justify the asymptotic nature of
the expansion. Of special mention is that, unlike almost all papers
in the literature, we need only the asymptotic behavior of
$\tilde{f}(x)$ for \emph{real values of $x$}, all analysis involving
complex parameters being carefully handled by the corresponding
functional equation.

We will derive an asymptotic expansion for $\tilde{f}(x)$ for large
real $x$ by Laplace transform techniques and suitable manipulation
of the saddle-point method, and then bridge the asymptotics of
$\mu_n$ and $\tilde{f}(n)$ by a variant of the saddle-point method
(or de-Poissonization procedure; see \cite{jacquet98a}); see
Figure~\ref{fig-pf} for a sketch of our proof.

\subsection{Asymptotics of $\tilde{f}(x)$}

We derive an asymptotic expansion for $\tilde{f}(x)$ in this
subsection.

\paragraph{Modified Laplace transform.}
For technical convenience, consider the modified Laplace transform
\begin{align*}
    \tilde{f}^\star(s):=\frac{1}{s}\int_0^\infty
    e^{-x/s}\tilde{f}(x)\dd x.
\end{align*}
Note that this use of the Laplace transform differs from the usual
one by a factor $1/s$ and by a change of variables $s\mapsto 1/s$.
Also the use of the exponential GF coupling with this Laplace
transform is equivalent to considering the ordinary GF of $\mu_n$;
see Section~\ref{sec:ovse} for more information.

Then the functional-differential equation (\ref{tfz-fe}) translates
into the following functional equation for $\tilde{f}^\star$
\begin{align} \label{f-star-fe}
    \tilde{f}^\star(s)=s\tilde{f}^\star(qs)+\frac{s}{1+s},
\end{align}
for $\Re(s)>0$.

Iterating the equation \eqref{f-star-fe} indefinitely, we get
\begin{align}\label{Ls-exact}
    \tilde{f}^\star(s)=\sum_{j\ge 0}
    \frac{q^{j(j+1)/2}}{1+q^js}\,s^{j+1}.
\end{align}
We will approximate $\tilde{f}^\star(s)$ for large $s$ by means of
the function
\[
    F(s)=\sum_{-\infty<j<\infty}
    \frac{q^{j(j+1)/2}}{1+q^js}\,s^{j+1},
\]
because adding terms of the form $s^{-j}$, $j\ge0$, does not alter
the asymptotic order of both functions.

\begin{lem} For $x>1$, we have
\begin{align}\label{L-f}
    F(x) =x^{1/2}\exp\left( \frac{(\log x)^2}{2\log\kappa}
    \right)G\left(\log_\kappa x \right),
\end{align}
where
\begin{align}
    G(u):=q^{(\{u\}^2+\{u\})/2}F\left(q^{-\{u\}}\right)
    \label{Gu}
\end{align}
is a continuous, positive, periodic function with period $1$.
\end{lem}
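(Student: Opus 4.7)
\medskip

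\noindent\textbf{Proof plan.} The crux of the argument is that the bilateral series $F$ should satisfy the clean $q$-difference equation $F(s)=sF(qs)$, unlike the unilateral transform $\tilde f^{\star}(s)$ which only satisfies \eqref{f-star-fe} up to the inhomogeneous remainder $s/(1+s)$. Once this is in hand, the shape \eqref{L-f} is essentially dictated by the observation (already made for $\varpi$) that $x^{1/2}\exp((\log x)^{2}/(2\log\kappa))$ is a particular solution of $\varpi(x)=x\varpi(qx)$, so the ratio of $F$ to this prefactor must be invariant under $s\mapsto qs$, i.e.\ $1$-periodic in $\log_{\kappa}s$.

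The first step is to verify $F(s)=sF(qs)$: substituting $s\mapsto qs$ and then reindexing $k=j+1$ in the bilateral sum sends the exponent $j(j+1)/2+j+1$ to $k(k+1)/2$, the denominator $1+q^{j+1}s$ to $1+q^{k}s$, and the power of $s$ drops by one, giving $F(qs)=F(s)/s$. To justify the reindexing I would establish absolute convergence: for $s>0$ the $j$th summand is bounded by $q^{j(j+1)/2}s^{j+1}$ for $j\ge 0$, and by $q^{j(j-1)/2}s^{j}$ for $j<0$ (using $1+q^{j}s\ge q^{j}s$); both tails decay like $q^{j^{2}/2}$, so uniform convergence on compact subsets of $(0,\infty)$ follows, and with it the continuity of $F$.

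The second step is to set
\[
    H(s):=s^{-1/2}\exp\!\left(-\tfrac{(\log s)^{2}}{2\log\kappa}\right)F(s),
\]
and check that $H(qs)=H(s)$. Expanding $(\log qs)^{2}=(\log s)^{2}-2(\log\kappa)(\log s)+(\log\kappa)^{2}$ gives
\[
    \exp\!\left(-\tfrac{(\log qs)^{2}}{2\log\kappa}\right)
    =s\,q^{1/2}\exp\!\left(-\tfrac{(\log s)^{2}}{2\log\kappa}\right),
\]
and combining this with $(qs)^{-1/2}=q^{-1/2}s^{-1/2}$ and $F(qs)=F(s)/s$ makes every extraneous factor cancel, yielding $H(qs)=H(s)$. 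Hence $H$ is a continuous, $1$-periodic function of $\log_{\kappa}s$, and setting $G(\log_{\kappa}s):=H(s)$ reproduces \eqref{L-f} directly.

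The last step is the explicit description of $G$. Taking $s=q^{-u}$ with $u\in[0,1)$, so that $\log s=u\log\kappa$, the exponential factor collapses to $q^{u^{2}/2}$ and the prefactor $s^{-1/2}$ becomes $q^{u/2}$, yielding $G(u)=q^{(u^{2}+u)/2}F(q^{-u})$; the $1$-periodic extension then replaces $u$ by $\{u\}$, matching \eqref{Gu}. Continuity across the integer points reduces to checking $\lim_{u\to 1^{-}}G(u)=qF(q^{-1})=F(1)=G(0)$, which is exactly $F(s)=sF(qs)$ evaluated at $s=q^{-1}$; positivity is immediate since every summand of $F(q^{-u})$ is strictly positive for real $u\in[0,1)$. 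The only delicate point I foresee is the absolute-convergence bookkeeping required to legitimize the bilateral reindexing in step one; everything else reduces to an algebraic verification.
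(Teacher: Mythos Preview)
Your proof is correct and follows essentially the same approach as the paper: both hinge on the functional equation $F(s)=sF(qs)$ and use it to show that $F(x)/\bigl(x^{1/2}\exp((\log x)^{2}/(2\log\kappa))\bigr)$ is $1$-periodic in $\log_{\kappa}x$. The only stylistic difference is that the paper iterates the relation $N=\lfloor\log_{\kappa}x\rfloor$ times and reads off the form directly, whereas you define the quotient $H$ and verify $H(qs)=H(s)$ in one step; the content is the same.
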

\begin{proof}
One can easily check that $F(s)$ satisfies a functional equation
similar to that of Jacobi's theta functions
\begin{equation}\label{L-fe}
    F(s)=sF(qs) \qquad(s\in\mathbb{C}).
\end{equation}
Iterating $N$ times this functional equation, we obtain
\begin{equation}\label{L-fe1}
    F(s)=q^{N(N-1)/2}s^NF\left(q^Ns\right)
    \qquad(s\in\mathbb{C}).
\end{equation}
Assume $x>1$. Take
\[
    N=\left\lfloor\log_\kappa x\right\rfloor
    =\log_\kappa x+\eta,
\]
where $\eta=-\left\{\log_\kappa x\right\}$. Then we have
\begin{align*}
    F(x)&=\exp\left(\frac{N(N-1)}{2}\,\log q+N\log x
    \right)F\left(e^{N\log q+\log x}\right)\\
    &=\exp\left( \frac{(\log x)^2}{2\log\kappa} +\frac{\log x}{2}
    +\frac{\eta(\eta-1)}{2}\log q\right)
    F\left(e^{\eta\log q}\right)\\
    &=q^{(\eta^2-\eta)/2}x^{1/2}
    \exp\left( \frac{(\log x)^2}{2\log\kappa}
    \right) F\left(e^{\eta\log q}\right),
\end{align*}
which, together with the functional equation $F(1/q)=F(1)/q$
(or $G(u+1)=G(u)$), proves the lemma.
\end{proof}

\paragraph{Asymptotic expansion of $\tilde{f}(x)$: saddle-point
method} By the inversion formula, we have
\begin{equation}\label{fx-Lif}
    \tilde{f}(x)=\frac{1}{2\pi i}
    \int_{r-i\infty}^{r+i\infty}
    \frac{e^{xs}}{s}\tilde{f}^\star\left(\frac{1}{s}\right)\dd s,
\end{equation}
where $r>0$ is a small number whose value will be specified later.
We now derive a few estimates for $\tilde{f}^\star(s)$.
\begin{lem} \emph{(i)} If $r>0$ and $|t|\ge 1$, then
\begin{align} \label{f1}
    \tilde{f}^\star\left(\frac{1}{r+it}\right)
    =O\left(\frac{1}{|t|}\right);
\end{align}
\emph{(ii)} if $0<r\le1$ and $|t|\le1$, then
\begin{equation}\label{f2}
    \tilde{f}^\star\left(\frac{1}{r+it}\right)
    =F\left(\frac{1}{r+it}\right)+O(1);
\end{equation}
\emph{(iii)} if $r>0$ and $c_mr\le |t|\le 1$,
where $c_m := \sqrt{q^{-2m}-1}$, $m\ge1$, then
\begin{align} \label{f3}
    \tilde{f}^\star\left(\frac{1}{r +it}\right)
    =O\left(r^m q^{\binom{m}2} F\left(\frac1r\right)\right).
\end{align}
\end{lem}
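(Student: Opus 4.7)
The plan is to prove the three estimates separately, all starting from the exact series representation \eqref{Ls-exact} for $\tilde{f}^\star(s)$ together with its companion $F(s)$, and exploiting the functional equations \eqref{f-star-fe} and \eqref{L-fe}. Throughout, the key structural fact is that $s=1/(r+it)$ with $r>0$ lies in the right half-plane, so that $|1+q^j s|\ge 1$ for every integer $j$ such that $\Re(q^j s)\ge 0$; this makes the series absolutely convergent with simple term-wise bounds.

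For part (i), I would use \eqref{Ls-exact} directly. When $|t|\ge 1$, $|s|\le 1/|t|\le 1$, and the $j=0$ term is $s/(1+s)=1/(1+r+it)$, whose modulus is bounded by $1/|t|$. For $j\ge 1$, each term has modulus at most $q^{j(j+1)/2}|s|^{j+1}\le q^{j(j+1)/2}/|t|^{j+1}$, so the tail sums to $O(1/|t|^2)$, giving the claimed $O(1/|t|)$. For part (ii), I would bound the difference $\tilde{f}^\star(s)-F(s)=-\sum_{j<0}q^{j(j+1)/2}s^{j+1}/(1+q^j s)$. In the stated region, $|s|\ge 1/\sqrt{r^2+1}\ge 1/\sqrt{2}$ and $\Re(s)>0$; writing $j=-k$ with $k\ge 1$, each term equals $q^{k(k+1)/2}s^{1-k}/(q^k+s)$, whose modulus is bounded (using $|q^k+s|\ge\Re(s)$) by $q^{k(k+1)/2}|s|^{1-k}/\Re(s)$. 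The sum converges uniformly to an $O(1)$ quantity.

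For part (iii), the main step, I would iterate the functional equation \eqref{f-star-fe} exactly $m$ times to obtain
\[
\tilde{f}^\star(s) = q^{\binom{m}{2}} s^m \tilde{f}^\star(q^m s) + \sum_{k=0}^{m-1}\frac{q^{k(k+1)/2}s^{k+1}}{1+q^k s},
\]
which matches the splitting of \eqref{Ls-exact} at index $m$. The hypothesis $c_m r\le |t|$ gives $|r+it|\ge q^{-m}r$, hence $|s|\le q^m/r$, so the prefactor $q^{\binom{m}{2}}|s|^m$ is under control. For the main term $\tilde{f}^\star(q^m s)$, I would exploit either (i) (which applies when $q^{-m}|t|\ge 1$) or (ii) applied at the rescaled point to pass to $F(q^m s)$, and then invoke the key consequence of \eqref{L-fe1}, namely $F(q^m/r)=r^m q^{-\binom{m}{2}}F(1/r)$, to return to the reference scale $F(1/r)$. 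The partial sum $\sum_{k<m}$ is controlled term by term and shown to be absorbed in the main bound.

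The main obstacle will be part (iii): the careful bookkeeping required to reconcile the iterated functional equation of $\tilde{f}^\star$ (which produces prefactor $q^{\binom{m}{2}}s^m$) with the functional equation of $F$ on the positive real axis (which produces $r^m q^{-\binom{m}{2}}$ in the opposite direction). What makes this delicate is that $q^m s$ is \emph{complex} while $F(1/r)$ is evaluated on the positive real axis, so cancellation arising from $\arg s$ being bounded away from $0$ in the region $c_m r\le|t|\le 1$ must be tracked carefully to match the stated bound, rather than merely $F(|s|)$. This is the analogue in the present setting of the saddle-point/steepest-descent mechanism that will ultimately be used when inverting the Laplace transform in \eqref{fx-Lif}.
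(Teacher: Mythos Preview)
Your arguments for (i) and (ii) follow the paper's route. One small slip in (ii): the lower bound $|q^k+s|\ge\Re(s)$ is too weak, since in the stated region $\Re(s)=r/(r^2+t^2)$ can be of order $r$, which is not bounded away from $0$; your sum then behaves like $1/r$ rather than $O(1)$. The trivial fix is to use instead $|q^k+s|\ge|s|$ (valid because $\Re(s)\ge0$ and $q^k>0$), which gives the term bound $q^{k(k+1)/2}|s|^{-k}$ and a uniformly convergent sum. This is exactly the paper's observation $|1/(1+q^j s)|\le\min\{q^{-j}|s|^{-1},1\}$.

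For (iii) your plan diverges from the paper and, as you yourself flag, is incomplete at the crucial point. Iterating \eqref{f-star-fe} $m$ times and then invoking (ii) at the rescaled point gives back $q^{\binom{m}{2}}s^m F(q^m s)=F(s)$ by \eqref{L-fe1}, so the whole problem reduces to bounding $|F(s)|$ at the \emph{complex} point $s=1/(r+it)$---which is exactly the estimate you set out to prove, now for $F$ instead of $\tilde f^\star$. The iteration is therefore a detour, not a reduction. The paper avoids this circularity by introducing the theta majorant
\[
    \vartheta(x):=\sum_{j\in\mathbb{Z}}q^{j(j-1)/2}x^j,
\]
and using the triangle inequality together with $|1+q^j s|\ge1$ (valid for all $j$ since $\Re(s)>0$) to get $|\tilde f^\star(s)|\le\vartheta(|s|)$. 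The condition $|t|\ge c_m r$ forces $|s|\le q^m/r$, and since $\vartheta$ satisfies the same functional equation $\vartheta(x)=x\vartheta(qx)$ as $F$, one has $\vartheta(q^m/r)=r^m q^{-\binom{m}{2}}\vartheta(1/r)=O\bigl(r^m q^{\binom{m}{2}}F(1/r)\bigr)$, the last step because $\vartheta$ and $F$ share the same asymptotic order on the positive real axis. No cancellation or saddle-point mechanism is involved here; the bound is a crude absolute-value majorization, and your expectation that one must ``track cancellation arising from $\arg s$'' misreads the situation. The missing ingredient in your proposal is precisely this real-variable majorant $\vartheta$, without which there is no way to pass from $F$ (or $\tilde f^\star$) at a complex argument to $F(1/r)$.
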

\begin{proof}
First, \eqref{f1} follows from (\ref{Ls-exact}). For the estimate
\eqref{f2}, we observe that
\[
    \left|\frac1{1+s q^j}\right|
    \le \min\{q^{-j}|s|^{-1},1\} \qquad(\Re(s)\ge0).
\]
Then
\[
    \tilde{f}^\star(s)=F(s)+O\left(|s|^{-1}\right),
\]
for $\Re(s)\ge 0$ and $|s|\ge c>0$. Also for $r>0$
\[
    \Re\left(\frac{1}{r+it}\right)
    =\frac{r}{r^2+t^2}>0;
\]
and, for $|t|\le 1$ and $0<r\le 1$
\[
    \frac{1}{|r+it|}\ge \frac{1}{\sqrt{2}}.
\]
From these two estimates, we then deduce \eqref{f2}.

On the other hand if $\Re(s)\ge0$,  then
\[
    |\tilde{f}^\star(s)|\le  \sum_{j\ge0} q^{j(j+1)/2}
    |s|^{j+1}\le \vartheta(|s|),
\]
where
\[
    \vartheta(x) :=\sum_{-\infty<j<\infty} {q^{j(j-1)/2}}x^{j}.
\]
It is easily checked that $\vartheta(x)$ satisfies the same
functional equation (\ref{L-fe}) as $F(x)$, namely,
\[
    \vartheta(x)=x\vartheta(qx).
\]
Thus, by the same arguments used for $F(x)$, we have, for $x>1$,
\[
    \vartheta(x)=x^{1/2}\exp\left( \frac{(\log x)^2}{2\log\kappa}
    \right)g(\log_\kappa x),
\]
where $g(x)$ is a continuous, bounded, periodic function. Comparing
this expression with (\ref{L-f}) for $F(x)$, we conclude that
$\vartheta(x)= O(F(x))$ for $x\ge1$.

Let $c_m:=\sqrt{q^{-2m}-1}$, $m>1$. Then, for $0<r<1$,
\begin{align*}
    \max_{c_mr\le |t|\le 1}
    \left|\tilde{f}^\star\left(\frac{1}{r +it}\right)\right|
    &\le  \max_{c_mr\le |t|\le 1} \left|\vartheta
    \left(\frac{1}{\sqrt{r^2+t^2}}\right)\right| \\
    &=\vartheta(q^m/r) \\
    &=r^mq^{m(m-1)/2}\vartheta(1/r)
    \\ &=O\left(r^m q^{\binom{m}2} F(1/r)\right).
\end{align*}
This proves \eqref{f3} and the lemma.
\end{proof}

By splitting the integral in \eqref{fx-Lif} into three ranges
$|t|\le c_mr$, $c_mr<|t|\le 1$, and $|t|>1$, and then applying the
estimates \eqref{f1} and \eqref{f3}, we deduce that
\begin{align}\label{tfx-ae1}
    \tilde{f}(x)e^{-xr}= I_r(x)
    +O\left(r^{m-1} q^{\binom{m}{2}} F(1/r)+1\right),
\end{align}
where
\[
    I_r(x) := \frac{1}{2\pi}\int_{-c_mr}^{c_mr}
    \frac{e^{ixt}}{r+it}\,
    F\left(\frac{1}{r+it}\right)\dd t.
\]
It remains to evaluate more precisely the integral $I_r(x)$ by
the saddle-point method.

We now take
\[
    N=\left\lfloor \log_\kappa(1/r)
    \right\rfloor=\log_\kappa(1/r)+\eta,
\]
where $\eta=-\{\log_\kappa(1/r)\}$. Applying the functional equation
(\ref{L-fe1}) with $s=1/(r+it)$, we get
\[
    I_r(x)=\frac{1}{2\pi}\int_{-c_mr}^{c_mr}
    \frac{e^{ixt}q^{N(N-1)/2}}{(r+it)^{N+1}}
    F\left(\frac{r q^{\eta}}{r+it}\right)
    \dd t.
\]
By the relation
\[
    F(1/r)=q^{N(N-1)/2}r^{-N}F(q^{\eta}),
\]
we then have
\[
\begin{split}
    I_r(x) &=\frac{F(1/r)}{2\pi r}
    \int_{-c_mr}^{c_mr}{e^{ixt}}
    \left(\frac{r}{r+it}\right)^{N+1}
    \frac{F (r q^{\eta}/(
    r+it))}{F(q^{\eta})}\dd t\\
    &=\frac{F(1/r)e^{xr}}{2\pi}\int_{-c_m}^{c_m}
    {e^{irxt}}\left(\frac{1}{1+it}\right)^{N+1}
    \frac{F(q^{\eta}/(1+it))}
    {F(q^{\eta})}\dd t\\
    &=\frac{F(1/r)e^{xr}}{2\pi}
    \int_{-c_m}^{c_m} e^{-xrt^2/2}
    H(t) \dd t,
\end{split}
\]
where
\[
    H(t) := e^{xr(it-\log(1+it)+t^2/2)}
    \frac{F(q^{\eta}/(1+it))}
    {(1+it)^{1+\eta}F(q^{\eta})}.
\]
We now choose $r=r(x)>0$ to be the approximate saddle-point such
that
\begin{align}
    \frac1r\log\frac1r = x\log\kappa. \label{eq-saddle}
\end{align}
Note that $r$ can be expressed in terms of the Lambert-W function
(principal solution of the equation $W(x)e^{W(x)}=x$) as
\[
    r = \frac{W(x\log\kappa)}{x\log\kappa};
\]
thus $\log(1/r)=W(x\log\kappa)$. Asymptotically,
\begin{align}\label{Wx-asymp}
    W(x) = \log x - \log\log x + \frac{\log\log x}{\log x}
    +\frac{(\log\log x)^2-2\log\log x}{2(\log x)^2} +
    O\left(\frac{(\log\log x)^3}{(\log x)^3}\right),
\end{align}
as $x\to\infty$; see \cite{corless96a}.

Since $m>1$ is arbitrary and $r\asymp x^{-1}\log x$, the
relation (\ref{tfx-ae1}) is an asymptotic approximation,
albeit less explicit.

To derive a more explicit expansion, we first observe that
\[
    e^{xr} F(1/r) = r^{-1/\log\kappa-1/2}
    e^{(\log(1/r))^2/(2\log\kappa)}
    G(\log_\kappa(1/r)),
\]
by (\ref{L-f}) and (\ref{eq-saddle}). Then what remains is standard
(see \cite{flajolet09a}): evaluating the integral in (\ref{tfx-ae1})
by Laplace's method (a change of variable $t \mapsto t/\sqrt{xr}$
followed by an asymptotic expansion of $H(t/\sqrt{xr})$ for large
$xr$ and then an integration term by term), and we obtain the
following expansion.

\begin{prop} \label{mu-asymp}
With $r$ given by (\ref{eq-saddle}), $\tilde{f}(x)$ satisfies
\begin{equation}\label{tfx-ae2}
    \tilde{f}(x) \sim\frac{e^{(\log(1/r))^2/(2\log\kappa)}
    G(\log_\kappa(1/r))}{r^{1/\log\kappa+1/2}
    \sqrt{{2\pi\log_\kappa(1/r)}}}\left(1+
    \sum_{j\ge1} \phi_j(\log_\kappa(1/r))
    (\log_\kappa(1/r))^{-j}\right),
\end{equation}
as $x\to\infty$, where $G$ is given in (\ref{Gu}) and the
$\phi_j(u)$'s are bounded, $1$-periodic functions of $u$ involving
the derivatives of $F\left(q^{-\{u\}}\right)$.
\end{prop}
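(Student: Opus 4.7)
The plan is to derive (\ref{tfx-ae2}) by applying the saddle-point method to the Laplace-inversion integral (\ref{fx-Lif}), using the three estimates of the preceding lemma to localize the integral near the saddle. We split the contour $\Re s = r$ into the three ranges $|t|\le c_m r$, $c_m r < |t|\le 1$ and $|t|>1$; the tail $|t|>1$ contributes $O(1)$ by (\ref{f1}), the near-tail $c_m r < |t|\le 1$ contributes $O(r^{m-1}q^{\binom{m}{2}}F(1/r))$ by (\ref{f3}), and on the central piece (\ref{f2}) lets us replace $\tilde{f}^\star$ by $F$ at an $O(1)$ cost. This is exactly the reduction (\ref{tfx-ae1}), and it pins the task down to a careful evaluation of the central integral $I_r(x)$.

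The next step is to choose $r=r(x)$ as the saddle root (\ref{eq-saddle}), so that $\log(1/r)=W(x\log\kappa)$ and the natural large parameter is $xr=\log_\kappa(1/r)$. With $N=\lfloor\log_\kappa(1/r)\rfloor$ and $\eta=-\{\log_\kappa(1/r)\}$, the quasi-automorphy (\ref{L-fe1}) of $F$ lets us rewrite $F(1/(r+it))$ inside $I_r(x)$ in terms of $F(q^\eta/(1+it))$, which is analytic in $t$ near $0$ and bounded uniformly in $\eta\in[0,1)$. After the change of variable $t\mapsto rt$ and extraction of the prefactor $F(1/r)e^{xr}$ via (\ref{L-f}), the central integral takes the form
\[
I_r(x)=\frac{F(1/r)\,e^{xr}}{2\pi}\int_{-c_m}^{c_m}e^{-xrt^2/2}\,H(t)\dd t,
\]
with $H(0)=1$ and $H$ analytic at the origin, depending on $r$ only through $1$-periodic factors of $\eta$.

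The remaining step is the standard Laplace-method expansion: rescaling $t\mapsto t/\sqrt{xr}$ turns the Gaussian factor into $e^{-t^2/2}$, and Taylor-expanding $H(t/\sqrt{xr})$ about $0$ produces a formal series in $1/\sqrt{xr}$ whose odd powers integrate to zero by parity, yielding after term-by-term integration an expansion in inverse powers of $xr=\log_\kappa(1/r)$. The Taylor coefficients are universal polynomials in the derivatives of $H$ at $0$; since $H$ depends on $r$ only through $\eta$, these coefficients are bounded $1$-periodic functions $\phi_j(\log_\kappa(1/r))$. Substituting $F(1/r)=r^{-1/2}\exp((\log(1/r))^2/(2\log\kappa))G(\log_\kappa(1/r))$ from (\ref{L-f}) and using (\ref{eq-saddle}) in the form $e^{xr}=r^{-1/\log\kappa}$ produces the main prefactor of (\ref{tfx-ae2}), while the Gaussian normalization supplies the $\sqrt{2\pi\log_\kappa(1/r)}$ in the denominator.

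The principal obstacle is showing that the expansion is genuinely asymptotic rather than merely formal. This requires three uniformity checks: (i) for each $j$, choosing $m=m(j)$ large enough that the tail term $r^{m-1}q^{\binom{m}{2}}F(1/r)$ in (\ref{tfx-ae1}) is negligible relative to the $j$-th correction, which works because $q^{\binom{m}{2}}$ decays super-exponentially in $m$ while $F(1/r)$ grows only like $\exp((\log(1/r))^2/(2\log\kappa))$; (ii) extending the truncated Gaussian integral from $(-c_m\sqrt{xr},c_m\sqrt{xr})$ to $\mathbb{R}$ at an exponentially small cost, which is legitimate since $xr\to\infty$; and (iii) bounding the Taylor remainder of $H$ uniformly in $\eta\in[0,1)$, which follows from the analyticity of $F$ on $\Re s>0$ together with the continuity and $1$-periodicity of $F(q^{-\{u\}})$ in $u$. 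Once these three points are in place, the expansion (\ref{tfx-ae2}) holds with error $O((\log_\kappa(1/r))^{-J})$ for every fixed $J$.
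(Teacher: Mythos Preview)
Your proposal is correct and follows essentially the same route as the paper: localize the inversion integral via the three estimates to obtain (\ref{tfx-ae1}), rewrite the central piece $I_r(x)$ using the functional equation (\ref{L-fe1}) and the choice (\ref{eq-saddle}), and finish with the standard Laplace expansion after the rescaling $t\mapsto t/\sqrt{xr}$. One small imprecision: $H(t)$ does \emph{not} depend on $r$ only through $\eta$, since the factor $e^{xr(it-\log(1+it)+t^2/2)}$ carries $xr=\log_\kappa(1/r)$ explicitly; the correct statement is that \emph{after} the substitution $t\mapsto t/\sqrt{xr}$ this factor becomes $\exp\bigl(it^3/(3\sqrt{xr})+O(t^4/xr)\bigr)$, so the reorganized series in $(xr)^{-1}$ has coefficients depending only on $\eta$, which is what yields the $1$-periodic $\phi_j$.
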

In particular,
\[
    \phi_1(u) = -\left(\frac1{12} -\frac{\{u\}(1-\{u\})}{2}
    +\frac{(1-\{u\})q^{-\{u\}} F'\left(q^{-\{u\}}\right)}
    {F\left(q^{-\{u\}}\right)}+\frac{q^{-2\{u\}}
    F''\left(q^{-\{u\}}\right)}{2F\left(q^{-\{u\}}\right)}\right).
\]

By using (\ref{Wx-asymp}), the leading term in (\ref{tfx-ae2}) can
be expressed completely in terms of $\log x$ as follows.
\begin{cor} As $x\to\infty$, $\tilde{f}(x)$ satisfies
\begin{align}\label{tfx-ae3}
    \tilde{f}(x)= \frac{G\left(
    \log_\kappa\frac{x}{\log_\kappa x}\right)}{\sqrt{2\pi}}\cdot
    \frac{x^{1/\log\kappa+1/2}}
    {\log_\kappa x}\exp\left(\frac{\left(
    \log \frac{x}{\log_\kappa x}\right)^2}{2\log\kappa}\right)
    \left(1+O\left(\frac{(\log\log x)^2}{\log x}\right)\right).
\end{align}
\end{cor}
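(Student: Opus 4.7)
The plan is simply to substitute the explicit $W$-asymptotics \eqref{Wx-asymp} into Proposition~\ref{mu-asymp}, since \eqref{tfx-ae2} already gives the full asymptotic shape in terms of $r$ and the Corollary merely rewrites it in terms of $x$. Setting $u := \log(1/r) = W(x\log\kappa)$ and $v := \log(x/\log_\kappa x) = \log x - \log\log x + \log\log\kappa$, I first establish
\[
    u - v = \frac{\log\log x - \log\log\kappa}{\log x}
    + O\!\left(\frac{\log\log x}{(\log x)^2}\right),
\]
by substituting $\log(x\log\kappa) = \log x + \log\log\kappa$ and $\log\log(x\log\kappa) = \log\log x + \log\log\kappa/\log x + O(1/(\log x)^2)$ into the first three terms of \eqref{Wx-asymp}. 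Retaining the exact coefficient $-\log\log\kappa$ (rather than absorbing it into $O(\log\log x/\log x)$) is essential.

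Next I expand each factor in \eqref{tfx-ae2}. For the exponential, $u^2 - v^2 = 2v(u-v) + (u-v)^2 = 2\log\log x - 2\log\log\kappa + O((\log\log x)^2/\log x)$, so
\[
    \exp\!\left(\frac{u^2}{2\log\kappa}\right)
    = \exp\!\left(\frac{v^2}{2\log\kappa}\right)
    (\log x)^{1/\log\kappa}(\log\kappa)^{-1/\log\kappa}
    \bigl(1 + O((\log\log x)^2/\log x)\bigr).
\]
For the power of $r$,
\[
    r^{-1/\log\kappa - 1/2} = e^{u(1/\log\kappa + 1/2)}
    = x^{1/\log\kappa + 1/2}(\log x)^{-(1/\log\kappa + 1/2)}
    (\log\kappa)^{1/\log\kappa + 1/2}
    \bigl(1 + O(\log\log x/\log x)\bigr).
\]
Multiplying these two, the factors $(\log\kappa)^{\pm 1/\log\kappa}$ cancel, leaving $x^{1/\log\kappa + 1/2}(\log x)^{-1/2}(\log\kappa)^{1/2}\exp(v^2/(2\log\kappa))$. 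The denominator $\sqrt{2\pi \log_\kappa(1/r)}$ equals $\sqrt{2\pi \log_\kappa x}\,(1 + O(\log\log x/\log x))$, and combining $\sqrt{\log x/\log\kappa}$ with the surviving $(\log x)^{-1/2}(\log\kappa)^{1/2}$ produces precisely the $1/\log_\kappa x$ factor appearing in \eqref{tfx-ae3}.

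Finally, inspection of \eqref{Gu} shows that $G$ is continuous on $\mathbb{R}$ (the one-sided limits at integers coincide via the functional equation $F(s)=sF(qs)$), strictly positive, bounded, and piecewise $C^\infty$ with uniformly bounded one-sided derivatives, hence globally Lipschitz. Since $\log_\kappa(1/r)$ differs from $\log_\kappa(x/\log_\kappa x)$ by $O(\log\log x/\log x)$, the replacement $G(\log_\kappa(1/r)) = G(\log_\kappa(x/\log_\kappa x))\,(1 + O(\log\log x/\log x))$ is valid, yielding \eqref{tfx-ae3}.

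The main obstacle is the second-order cancellation. With only the crude bound $u = v + O(\log\log x/\log x)$, one would obtain a spurious $\exp(O(\log\log x)) = (\log x)^{O(1)}$ factor in the exponential. The clean cancellation of $(\log\kappa)^{\pm 1/\log\kappa}$ hinges on retaining the $-\log\log\kappa$ correction in $u - v$, so the bulk of the work lies in sharpening \eqref{Wx-asymp} sufficiently to capture this term, after which the rest is bookkeeping.
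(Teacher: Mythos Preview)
Your proposal is correct and follows exactly the approach the paper indicates: the paper merely states that the Corollary is obtained ``by using \eqref{Wx-asymp}'' in \eqref{tfx-ae2} and gives no further detail, so you are simply supplying the omitted bookkeeping. One very minor point: your intermediate error $O(\log\log x/(\log x)^2)$ for $u-v$ should strictly be $O((\log\log x)^2/(\log x)^2)$ (coming from the quartic term in \eqref{Wx-asymp}), but after multiplication by $2v\sim 2\log x$ this still lands inside the target $O((\log\log x)^2/\log x)$, so the argument is unaffected.
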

This is nothing but \eqref{mun-asymp} with $n$ there replaced by
$x$.

As another consequence, we see, by (\ref{tfz-fe}) and
(\ref{tfx-ae3}), that
\[
    \frac{\tilde{f}'(x)}{\tilde{f}(x)} \sim
    \frac{\tilde{f}(qx)}{\tilde{f}(x)} \sim
    \frac{\log_\kappa x}{x}.
\]
More generally, we have the following asymptotic relations for
$\tilde{f}^{(j)}(x)$ and $\tilde{f}(q^jx)$.
\begin{cor} \label{lmm-ratio-f} For $j\ge1$
\begin{align}
    \frac{\tilde{f}^{(j)}(x)}{\tilde{f}(x)} &\sim
    \left(\frac{\log_\kappa x}{x}\right)^{j}\label{ratio-f1}\\
    \frac{\tilde{f}(q^jx)}{\tilde{f}(x)} &\sim q^{-j(j-1)/2}
    \left(\frac{\log_\kappa x}{x}\right)^{j}.\label{ratio-f2}
\end{align}
\end{cor}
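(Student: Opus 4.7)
The plan is to reduce both relations to the single base case $\tilde f(qx)/\tilde f(x) \sim \log_\kappa x/x$, which is already recorded in the paragraph immediately preceding the corollary (it follows directly from applying \eqref{tfx-ae3} at $x$ and at $qx$ and using the continuity and $1$-periodicity of $G$). Given this equivalence, \eqref{ratio-f2} falls out by telescoping:
\[
    \frac{\tilde f(q^j x)}{\tilde f(x)} = \prod_{i=0}^{j-1}\frac{\tilde f(q\cdot q^i x)}{\tilde f(q^i x)} \sim \prod_{i=0}^{j-1}\frac{\log_\kappa(q^i x)}{q^i x}.
\]
Since $j$ is fixed, $\log_\kappa(q^i x) = \log_\kappa x - i \sim \log_\kappa x$ for each $i \le j-1$, and $\prod_{i=0}^{j-1}q^{-i}=q^{-j(j-1)/2}$, so the product collapses to $q^{-j(j-1)/2}(\log_\kappa x/x)^j$, which is precisely \eqref{ratio-f2}.

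For \eqref{ratio-f1} I would first establish by induction on $j$ the $q$-shift identity
\[
    \tilde f^{(j)}(x) = q^{j(j-1)/2}\,\tilde f(q^j x) + E_j(x),
\]
where $E_j(x)$ is a finite linear combination of the terms $e^{-q^i x}$, $0\le i<j$, so that $E_j(x)=O(e^{-c_j x})$ for some $c_j>0$. The inductive step differentiates the formula and invokes \eqref{tfz-fe} in the form $\tilde f'(q^j x) = \tilde f(q^{j+1}x)+e^{-q^j x}$, which multiplies the prefactor by $q^j$ at each step and produces the triangular exponent $j(j-1)/2$. Because $\tilde f(x)$ grows superpolynomially by Proposition~\ref{mu-asymp}, $E_j(x)/\tilde f(x)\to 0$ faster than any negative power of $x$, and substituting \eqref{ratio-f2} then yields
\[
    \frac{\tilde f^{(j)}(x)}{\tilde f(x)} \sim q^{j(j-1)/2}\cdot \frac{\tilde f(q^j x)}{\tilde f(x)} \sim \left(\frac{\log_\kappa x}{x}\right)^j.
\]

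The only point that needs a bit of care, and hence the main obstacle, is the telescoping step for \eqref{ratio-f2}: the base-case equivalence must be applied a bounded number of times at the shifted arguments $q^i x$ (all tending to infinity with $x$), so one must verify that the cumulative multiplicative $(1+o(1))$ errors stay $(1+o(1))$ and that the replacement $\log_\kappa(q^i x)\sim \log_\kappa x$ is uniform in $i\le j-1$. Both are immediate since $j$ is fixed, so the remainder of the argument reduces to routine bookkeeping.
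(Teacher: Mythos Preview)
Your argument is correct. Both \eqref{ratio-f2} via telescoping and the reduction of \eqref{ratio-f1} to \eqref{ratio-f2} through the iterated identity $\tilde f^{(j)}(x)=q^{j(j-1)/2}\tilde f(q^jx)+E_j(x)$ go through exactly as you describe; the exponentially decaying $E_j$ is negligible against $(\log_\kappa x/x)^j\tilde f(x)$, which still tends to infinity by Proposition~\ref{mu-asymp}.

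The route differs from the paper's. The paper's one-line justification for \eqref{ratio-f1} is to rerun the saddle-point analysis on the differentiated integral representation
\[
    \tilde f^{(j)}(x)=\frac{1}{2\pi i}\int_{r-i\infty}^{r+i\infty}\frac{e^{xs}}{s^{\,j-1}}\,\tilde f^\star\!\left(\frac1s\right)\dd s,
\]
so that the extra factor $s^{-j}\approx r^{-j}\sim(x/\log_\kappa x)^{-j}$ drops out directly. Your approach bypasses the complex-analytic machinery entirely and instead iterates the functional equation \eqref{tfz-fe}; this is more elementary and, incidentally, is precisely the device the paper itself deploys a few lines later (the formula $\tilde f^{(4)}(z)=-e^{-z}+q^3e^{-qz}-q^5e^{-q^2z}+q^6e^{-q^3z}+q^6\tilde f(q^4z)$ in the de-Poissonization proof). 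What the paper's route buys is uniformity in $j$ if one ever needed it, since the saddle-point estimate handles all $j$ at once; what your route buys is that it needs only the real-variable asymptotic \eqref{tfx-ae3} and no further contour work.
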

Note that (\ref{ratio-f1}) also follows easily from the integral
representation
\[
    \tilde{f}^{(j)}(x) = \frac1{2\pi i} \int_{r-i\infty}
    ^{r+i\infty} \frac{e^{xs}}{s^{j-1}}\tilde{f}^\star
    \left(\frac1s\right) \dd s,
\]
and exactly the same arguments used above.

\subsection{Asymptotics of $\mu_n$}

We first derive a simple lemma for the ratio $f(x+y)/f(x)$ when $y$
is not too large by using (\ref{ratio-f1}).
\begin{lem}\label{lmm-ratio-f2} Assume $x>1$. If $|y|=o(x/\log x)$,
then
\begin{align}
    \frac{\tilde{f}(x+y)}{\tilde{f}(x)}
    =1+O\left(\frac{|y|\log x}{x}\right).\label{ratio-ff}
\end{align}
\end{lem}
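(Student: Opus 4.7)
The plan is to relate the ratio $\tilde{f}(x+y)/\tilde{f}(x)$ to an integral of the logarithmic derivative, which is already controlled by Corollary~\ref{lmm-ratio-f}. The asymptotic expansion \eqref{tfx-ae3} together with the positivity of $G$ guarantees that $\tilde{f}(u)>0$ for every sufficiently large $u$. Since the assumption $|y|=o(x/\log x)$ forces $|y|/x\to 0$, for $x$ large enough every point $u$ between $x$ and $x+y$ lies in a shrinking neighborhood of $x$, so $\tilde{f}$ is positive on the whole segment of integration. Hence $\log\tilde{f}$ is well-defined there and one may write
\[
    \frac{\tilde{f}(x+y)}{\tilde{f}(x)}
    =\exp\left(\int_{x}^{x+y}
    \frac{\tilde{f}'(u)}{\tilde{f}(u)}\dd u\right).
\]

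Next I would invoke \eqref{ratio-f1}, which gives $\tilde{f}'(u)/\tilde{f}(u)=O(\log u/u)$ as $u\to\infty$. Because $u\sim x$ uniformly on the segment of integration, $\log u/u=O(\log x/x)$ uniformly, whence
\[
    \left|\int_{x}^{x+y} \frac{\tilde{f}'(u)}{\tilde{f}(u)}\dd u\right|
    =O\left(\frac{|y|\log x}{x}\right).
\]
The hypothesis $|y|=o(x/\log x)$ makes the right-hand side $o(1)$, so the elementary bound $e^{\eta}=1+O(\eta)$ for $|\eta|\le 1$ applies and yields \eqref{ratio-ff}.

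The main technical obstacle is ensuring that the ``$\sim$'' in \eqref{ratio-f1} is \emph{uniform} on the segment joining $x$ and $x+y$, rather than only pointwise. This is, however, automatic once one inspects the source of \eqref{ratio-f1}: it rests on the uniform asymptotic \eqref{tfx-ae3}, whose error term $O((\log\log x)^2/\log x)$ is stable under a perturbation of $x$ by $o(x/\log x)$ (since such a perturbation shifts $\log x$ by $o(1)$). An equivalent and perhaps more elementary route is to apply the mean value theorem directly, writing $\tilde{f}(x+y)-\tilde{f}(x)=y\,\tilde{f}'(\xi)$ for some $\xi$ between $x$ and $x+y$, and then factoring $\tilde{f}'(\xi)/\tilde{f}(x)=\bigl(\tilde{f}'(\xi)/\tilde{f}(\xi)\bigr)\cdot\bigl(\tilde{f}(\xi)/\tilde{f}(x)\bigr)$, where the first ratio is $O(\log x/x)$ by \eqref{ratio-f1} and the second is $O(1)$ by \eqref{tfx-ae3}. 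This variant avoids any appeal to logarithms and makes the circularity-free nature of the argument transparent.
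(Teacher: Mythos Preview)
Your argument is correct and follows essentially the same route as the paper: write $\log\bigl(\tilde f(x+y)/\tilde f(x)\bigr)$ as an integral of $\tilde f'/\tilde f$ along the segment from $x$ to $x+y$ (the paper parametrizes this as $y\int_0^1 \tilde f'(x+yt)/\tilde f(x+yt)\,dt$), bound the integrand by $O(\log x/x)$ via \eqref{ratio-f1}, and exponentiate. Your remarks on positivity and uniformity make explicit what the paper leaves implicit, and the mean-value alternative is a harmless variant of the same idea.
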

\begin{proof}
By (\ref{ratio-f1}), we have
\[
\begin{split}
    \log\frac{\tilde{f}(x+y)}{\tilde{f}(x)}
    &=y\int_0^1\frac{\tilde{f}'(x+yt)}
    {\tilde{f}(x+yt)}\dd t\\
    &=yO\left(\int_0^1\frac{\log|x+yt|}
    {|x+yt|}\dd t\right) \\
    &=O\left(\frac{|y|\log|x|}{|x|}
    \right),
\end{split}
\]
from which (\ref{ratio-ff}) follows.
\end{proof}

\begin{thm} The expected cost used by the exhaustive search algorithm
satisfies the asymptotic expansion
\begin{align}\label{mun-asymp2}
    \mu_n \sim \tilde{f}(n) + \sum_{j\ge2}
    \frac{\tilde{f}^{(j)}(n)}{j!}\,\tau_j(n),
\end{align}
where $\tau_j(n)$ is a (Charlier) polynomial in $n$ of degree
$\lfloor j/2\rfloor$ defined by
\begin{align}\label{tau-jn}
    \tau_j(n) := \sum_{0\le \ell \le j}
    \binom{j}{\ell}(-1)^\ell \frac{n! n^\ell}{(n-k+\ell)!}
    \qquad(j=0,1,\dots).
\end{align}
\end{thm}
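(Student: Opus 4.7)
The plan is to recognize \eqref{mun-asymp2} as a Poisson--Charlier expansion and to justify its asymptotic nature by carefully controlling the tail of the Taylor expansion. Starting from $f(z)=e^z\tilde{f}(z)$, Cauchy's formula gives
\[
    \mu_n=\frac{n!}{2\pi i}\oint z^{-n-1}e^z\tilde{f}(z)\,dz.
\]
Taylor-expanding $\tilde{f}$ around $z=n$ and integrating term by term yields the formal identity $\mu_n=\sum_{j\ge 0}\frac{\tilde{f}^{(j)}(n)}{j!}\tau_j(n)$ with $\tau_j(n)=n![z^n]e^z(z-n)^j$. Expanding $(z-n)^j$ by the binomial theorem and using $n![z^n]e^z z^{j-\ell}=n!/(n-j+\ell)!$ recovers the explicit formula \eqref{tau-jn}. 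A direct computation gives $\tau_0(n)=1$ and $\tau_1(n)=0$, so only $j\ge 2$ produce genuine corrections beyond the Poisson heuristic $\tilde{f}(n)$.

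The real content of the theorem is to show that truncating at order $J\ge 2$ leaves a remainder of order $O(n^{\lfloor J/2\rfloor}\tilde{f}^{(J)}(n))$, an amount that is genuinely smaller than the last retained term because Corollary~\ref{lmm-ratio-f} gives $\tilde{f}^{(J)}(n)/\tilde{f}^{(J-1)}(n)\asymp\log_\kappa n/n$. To this end I would invoke Taylor's theorem with integral remainder, $R_J(z)=\frac{1}{(J-1)!}\int_n^z(z-w)^{J-1}\tilde{f}^{(J)}(w)\,dw$, and split the Cauchy contour $|z|=n$ into a central arc $|z-n|\le\rho_n:=\sqrt{n}\log n$ (the natural de-Poissonization window, matching the width of the Poisson kernel $n!|z|^{-n-1}e^{\Re z}$) and its complement.

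On the central arc the combination of Lemma~\ref{lmm-ratio-f2} and the functional equation \eqref{tfz-fe} (which propagates the real-axis asymptotics of $\tilde{f}$ to a thin complex neighborhood of the positive real axis) gives $\tilde{f}^{(J)}(w)=\tilde{f}^{(J)}(n)(1+o(1))$ uniformly on $|w-n|\le\rho_n$. Integrating $|z-n|^{J-1}$ against the Poisson weight by the standard Gaussian saddle-point computation then delivers exactly the claimed $O(n^{\lfloor J/2\rfloor}\tilde{f}^{(J)}(n))$ bound on the remainder contribution; this is the quantitative de-Poissonization step indicated in Figure~\ref{fig-pf}.

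The main obstacle is the complementary range $|z-n|>\rho_n$, where $\tilde{f}$ admits no direct polynomial control in the complex plane. Rather than attempting to bound $\tilde{f}$ directly there, I would substitute the inverse Laplace representation \eqref{fx-Lif} into the Cauchy integral, interchange the two contours, and perform the $z$-integration explicitly, arriving at
\[
    \mu_n=\frac{1}{2\pi i}\int_{r-i\infty}^{r+i\infty}\frac{\tilde{f}^\star(1/s)}{s}\,(1+s)^n\,ds.
\]
A saddle-point analysis of this integral around its saddle $s\asymp 1/n$, using the three regime estimates \eqref{f1}--\eqref{f3} on $\tilde{f}^\star$ already established, shows that the tails are exponentially subdominant and in particular negligible compared with $n^{\lfloor J/2\rfloor}\tilde{f}^{(J)}(n)$. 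This is precisely where the paper's deliberate detour through the Laplace transform pays off: it obviates any need for complex-variable bounds on $\tilde{f}$ itself, and closes the proof.
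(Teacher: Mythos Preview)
Your formal setup (Cauchy integral, Taylor expansion at $z=n$, identification of $\tau_j(n)=n![z^n]e^z(z-n)^j$) is correct, but the actual remainder estimate has two gaps, and the paper closes them by a quite different and simpler device.

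First, your central-arc step invokes Lemma~\ref{lmm-ratio-f2} to control $\tilde{f}^{(J)}(w)$ for complex $w$ with $|w-n|\le\sqrt{n}\log n$. That lemma is proved only for real arguments, and the functional equation $\tilde{f}'(z)=\tilde{f}(qz)+e^{-z}$ does not by itself ``propagate'' it to a complex strip: to bound $\tilde{f}'(w)$ you would need $\tilde{f}(qw)$ at complex $qw$, which is exactly what you do not have. Second, your tail step is not a bound on the Taylor remainder at all. Interchanging contours produces the exact identity $\mu_n=\frac{1}{2\pi i}\int s^{-1}\tilde{f}^\star(1/s)(1+s)^n\,ds$, and showing that the $s$-tails of \emph{this} integral are small tells you that $\mu_n$ is concentrated near $s\asymp 1/n$; it does not bound $\frac{n!}{2\pi i}\oint_{|z-n|>\rho_n}z^{-n-1}e^z(z-n)^J R_J(z)\,dz$, which is the quantity you need.

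The paper avoids both problems by never splitting the circle. Iterating \eqref{tfz-fe} $J$ times gives
\[
    \tilde{f}^{(J)}(z)=q^{\binom{J}{2}}\tilde{f}(q^Jz)+(\text{explicit finite sum of }e^{-q^jz}),
\]
and since $f(z)=e^z\tilde{f}(z)$ has nonnegative Taylor coefficients one has the global bound $|\tilde{f}(q^Jne^{i\theta})|\le e^{q^Jn(1-\cos\theta)}\tilde{f}(q^Jn)$ for all $|\theta|\le\pi$. Substituting this into the integral remainder $R_J(z)$ and integrating over the full circle $|z|=n$, the Poisson kernel supplies a factor $e^{-n(1-\cos\theta)}$, so the integrand carries $e^{-(1-q^J)n(1-\cos\theta)}$, which localizes automatically. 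A single Laplace-type estimate then yields $R_n=O\bigl(n^{\lfloor J/2\rfloor}\tilde{f}(q^Jn)\bigr)=O\bigl(n^{\lfloor J/2\rfloor}\tilde{f}^{(J)}(n)\bigr)$ via \eqref{ratio-f2}. No complex-variable asymptotics of $\tilde{f}$ and no return to the Laplace transform are needed; the paper explicitly flags this as the point of the method.
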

In particular, $\tau_0(n)=1$, $\tau_1(n) = 0$, $\tau_2(n) = -n$,
$\tau_3(n) = 2n$, and $\tau_4(n) = 3n^2-6n$. Thus, by
\eqref{tfx-ae2} and \eqref{ratio-f1},
\[
    \mu_n = \tilde{f}(n) \left(1+O\left(n^{-1}(\log n)^2\right)
    \right) ,
\]
which proves Theorem~\ref{mun-asymp}.

\begin{proof}
For simplicity, we prove only the following estimate
\begin{align} \label{mun-asymp3}
    \mu_n=\tilde{f}(n)-\frac{n}{2}\,\tilde{f}''(n)
    +O\left( n^{-2}(\log n)^4\tilde{f}(n)\right).
\end{align}
The same method of proof easily extends to the proof of
\eqref{mun-asymp2}.

We start with the Taylor expansion of $\tilde{f}(z)$ at $z=n$ to the
fourth order
\begin{align}\label{fz-taylor}
    \tilde{f}(z)=\tilde{f}(n)+\tilde{f}'(n)(z-n)
    +\frac{\tilde{f}''(n)}{2!}(z-n)^2+
    \frac{\tilde{f}'''(n)}{3!}(z-n)^3+(z-n)^4R(z),
\end{align}
where
\[
    R(z)=\frac{1}{3!}\int_0^1\tilde{f}^{(4)}
    \bigl(n+(z-n)t\bigr)(1-t)^3\dd t.
\]
By applying successively the equation (\ref{tfz-fe}), we get
\[
    \tilde{f}^{(4)}(z)=-e^{-z}+q^3e^{-qz}-q^5e^{-q^2z}
    +q^6e^{-q^3z}+q^6\tilde{f}(q^4z).
\]
It follows that
\begin{align*}
    \left|R\left(ne^{i\theta}\right)\right|
    &\le\int_0^1\bigl|\tilde{f}^{(4)}
    \bigl(n+n(e^{i\theta}-1)t\bigr)\bigr|\dd t  \\
    &=O\left(e^{-n\cos \theta}+e^{-q^3n\cos \theta}+
    \int_0^1\bigl|\tilde{f}\bigl(q^4n+q^4n
    (e^{i\theta}-1)t\bigr)\bigr|\dd t \right),
\end{align*}
for $|\theta|\le\pi$. Replacing first $\tilde{f}(z)$ inside the
integral by $e^{-z}f(z)$, using the inequality $|f(z)|\le f(|z|)$
and then substituting back $f(q^4n)$ by $e^{q^4n}\tilde{f}(q^4n)$,
we then have
\begin{align}
    \left|R\left(ne^{i\theta}\right)\right|
    &=O\left(e^{-q^3n\cos \theta}+
    f(q^4n)\int_0^1\bigl|e^{-q^4n-q^4n(e^{i\theta}-1)t}\bigr|\dd t
    \right) \notag \\ &=O\left(e^{-q^3n\cos \theta}+
    \tilde{f}(q^4n)\int_0^1e^{q^4n(1-\cos \theta)t}\dd t \right)
    \notag\\
    &=O\left(e^{-q^3n\cos \theta}+
    \tilde{f}(q^4n)e^{q^4n(1-\cos \theta)}\right), \label{Rz-est}
\end{align}
uniformly for $|\theta|\le\pi$. By Cauchy's integral formula and
(\ref{fz-taylor}), we have
\[
\begin{split}
    \mu_n&=\frac{n!}{2\pi i}\oint_{|z|=n} z^{-n-1} e^z
    \tilde{f}(z)\dd z \\
    &=\frac{n!}{2\pi i}\oint_{|z|=n}z^{-n-1} e^z
    \left(\tilde{f}(n)+\frac{\tilde{f}'(n)}{1!}(z-n)
    +\frac{\tilde{f}''(n)}{2!}(z-n)^2+
    \frac{\tilde{f}'''(n)}{3!}(z-n)^3\right)\dd z\\
    &\qquad +R_n \\
    &= \tilde{f}(n)-\frac{n}{2}\tilde{f}''(n)
    +\frac{n}{3}\tilde{f}'''(n)+R_n,
\end{split}
\]
where
\[
    R_n := \frac{n!}{2\pi i}\oint_{|z|=n}
    z^{-n-1} e^z(z-n)^4R(z)\dd z.
\]
By the estimate (\ref{Rz-est}) for $R(z)$, we have
\[
\begin{split}
    R_n &=O\left(n!n^{4-n}\int_{-\pi}^\pi\theta^4
    e^{n\cos\theta}|R(ne^{i\theta})|\dd \theta\right) \\
    &=O\left(n!n^{4-n}\int_{-\pi}^\pi
    \theta^4e^{n\cos\theta}\left(e^{-q^3n\cos \theta}+
    \tilde{f}(q^4n)e^{q^4n(1-\cos\theta)}\right)\dd\theta\right)
    \\ &=O\left(n! n^{4-n}\int_{-\pi}^\pi\theta^4
    e^{n(1-q^3)\cos \theta}\dd\theta +n!
    \tilde{f}(q^4n)n^{4-n}e^n\int_{-\pi}^\pi
    \theta^4e^{-(1-q^4)n(1-\cos \theta)}\dd\theta
    \right)\\ &=O\left(n!n^{-n+3/2}e^{(1-q^3)n}
    +n!e^nn^{-n+3/2}\tilde{f}(q^4n)\right)\\
    &=O\left(n^2e^{-q^3n} + n^2 \tilde{f}(q^4n)\right)\\
    &=O\left(n^{-2}(\log n)^4\tilde{f}(n)\right),
\end{split}
\]
by (\ref{ratio-f2}). Note that again by (\ref{ratio-f1})
\[
    n\tilde{f}'''(n)=O\left( n^{-2}(\log n)^3\tilde{f}(n)\right),
\]
so this error bound is absorbed in $O(\tilde{f}(n) n^{-2}(\log
n)^4)$. This proves \eqref{mun-asymp3}.
\end{proof}

\section{Alternative expansions and approaches}\label{sec:aa}

We discuss in this section other possible approaches to the
asymptotic expansions we derived above.

\subsection{An alternative expansion for $\tilde{f}(x)$}
We begin with an alternative asymptotic expansion for
$\tilde{f}(x)$, starting from the integral representation
\eqref{fx-Lif}, which, as showed above, can be approximated by
\[
    \tilde{f}(x) = \frac1{2\pi i}\int_{r-i\infty}^{r+i\infty}
    \frac{e^{xs}}{s}\,F\left(\frac1s\right)\dd{s}+O(1)
\]
For simplicity, we will write this as
\[
    \tilde{f}(x) \simeq \frac1{2\pi i}\int_{r-i\infty}^{r+i\infty}
    \frac{e^{xs}}{s}\,F\left(\frac1s\right)\dd{s}.
\]
Now we use the same $N=\tr{\log_\kappa (1/r)}
=\log_\kappa(1/r)-\eta$ and
\[
    F\left(\frac1s\right) = q^{N(N-1)/2}s^{-N}
    F\left(\frac{q^N}s\right),
\]
so that
\begin{align} \label{fx-ir}
    \tilde{f}(x) \simeq \frac{q^{\binom{N}2}}{2\pi i}
    \int_{r-i\infty}^{r+i\infty}
    \frac{e^{xs}}{s^{N+1}}\,F\left(\frac{q^N}s\right)\dd{s}.
\end{align}
Now instead of expanding $F(q^N/(r+it))$ at $t=0$, we expand
$F(q^N/s)$ at $s=r$, giving
\[
    F\left(\frac{q^N}s\right) = F\left(\frac{q^N}r-
    \frac{q^N}{r}\left(1-\frac rs\right)\right)
    = \sum_{m\ge0}\frac{(-1)^mQ^m}{m!}\,
    F_m \left(1-\frac rs\right)^m,
\]
where $Q := q^N/r=q^{-\{\log_\kappa (1/r)\}}$ and $F_j$ denotes
$F^{(j)}(Q)$. Substituting this expansion into the integral
representation \eqref{fx-ir} and then integrating term-by-term, we
obtain
\begin{align}
    \tilde{f}(x)q^{-\binom{N}2}
    &\simeq \sum_{m\ge0}\frac{(-1)^mQ^m}{m!}\,
    F_m \cdot\frac1{2\pi i}\int_{r-i\infty}^{r+i\infty}
    \frac{e^{xs}}{s^{N+1}}\,\left(1-\frac rs\right)^m\dd{s}
    \nonumber\\
    &= \frac{x^N}{N!}\sum_{m\ge0}\frac{(-1)^mQ^m}{m!}\,
    F_m T_m(N), \label{tfx-aea}
\end{align}
where, by the integral representation for Gamma function (see
\cite{flajolet09a}),
\begin{align*}
    T_m(N) &:= \frac1{2\pi i}\int_{r-i\infty}^{r+i\infty}
    \frac{e^{xs}}{s^{N+1}}\,\left(1-\frac rs\right)^m\dd{s}
    \nonumber\\  &=\sum_{0\le j\le m}
    \binom{m}{j}(-r)^j \frac{N!x^j}{(N+j)!}.
\end{align*}
For computational purposes, it is preferable to use the recurrence
\[
    T_m(N) = T_{m-1}(N) - \frac{rx}{N+1}\,T_{m-1}(N+1).
\]
The value of $r$ is arbitrary up to now. If we take $r=N/x$, then
\[
    T_m(N) := \sum_{0\le j\le m}
    \binom{m}{j}(-1)^j \frac{N!N^j}{(N+j)!}.
\]
Note that $|T_m(N)|\asymp N^{-\lceil m/2\rceil}$. In particular,
\[
    T_0(N) = 1,\; T_1(N) = \frac1{N+1},\;
    T_3(N) = -\frac{N-2}{(N+1)(N+2)},\; \cdots.
\]
Since $q^N/r$ remains bounded, we can regroup the terms and get an
asymptotic expansion in terms of increasing powers of $N^{-1}$, the
first few terms being given as follows
\begin{align*}
\begin{split}
    \frac{\tilde{f}(x)}{q^{\binom{N}2}x^N/N!}
    &\simeq F_0 - \frac{Q(2F_1+F_2Q)}{2N}
    + \frac{Q(3F_4Q^3+28F_3Q^2+60F_2Q+24F_1)}
    {24N^2} \\
    &\quad - \frac{Q(F_6Q^5+22F_5Q^4+152F_4Q^3
    +384F_3Q^2+312F_2Q+48F_1)}{48N^3}\\
    &\quad+\cdots.
\end{split}
\end{align*}

On the other hand, if we choose $r=(N+1)/x$, then $T_1(N)=0$ and
\[
    T_0(N) = 1,\; T_2(N) = -\frac1{N+2},\;
    T_3(N) = -\frac{4}{(N+2)(N+3)}, \;\cdots,
\]
so that
\begin{align*}
\begin{split}
    \frac{\tilde{f}(x)}{q^{\binom{N}2}x^N/N!}
    &\simeq F_0 - \frac{F_2Q^2}{2(N+2)}
    + \frac{Q^3(3F_4Q+16F_3)}{24(N+2)^2} \\
    &\quad - \frac{Q^3(F_6Q^3+16F_5Q^2+60F_4Q
    +32F_3)}{48(N+2)^3}+\cdots.
\end{split}
\end{align*}
While $|T_m(N)|\asymp N^{-\lceil m/2\rceil}$ for $m\ge2$ as in the
case of $r=N/x$, this is a better expansion because the first term
incorporates more information.

The more transparent expansion \eqref{tfx-aea} is \emph{a priori} a
\emph{formal} one whose asymptotic nature can be easily justified by
the same local analysis as above, details being omitted here. We
summarize the analysis in the following theorem.
\begin{thm} The Poisson generating function of $\mu_n$ satisfies the
asymptotic expansion
\begin{align} \label{tfx-aae}
    \tilde{f}(x) \sim q^{\binom{N}{2}}
    \frac{x^N}{N!}\sum_{m\ge0}\frac{(-1)^mQ^m}{m!}\,
    F^{(m)}(Q) T_m(N),
\end{align}
where
$N=\tr{\log_\kappa (1/r)}=\log_\kappa(1/r)-\eta$, $r := N/x$,
$Q:= q^{-\log_\kappa(1/r)}$ and $T_m(N)$ is defined by
\[
    T_m(N) := \sum_{0\le j\le m}
    \binom{m}{j}(-1)^j \frac{N!(N+1)^j}{(N+j)!}.
\]
\end{thm}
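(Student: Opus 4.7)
The plan is to render rigorous the formal derivation given in the discussion immediately preceding the theorem. The starting point is the inverse Laplace representation \eqref{fx-Lif} together with the three-piece bounds \eqref{f1}--\eqref{f3} for $\tilde{f}^\star(1/(r+it))$ established in the preceding lemma. These allow us to replace $\tilde{f}^\star(1/s)$ by $F(1/s)$ throughout the vertical contour $\Re(s)=r$, incurring only an additive $O(1)$ from the tail $|t|\ge 1$ and a contribution $O(r^{m-1}q^{\binom{m}{2}}F(1/r))$ from the intermediate range $c_m r<|t|\le 1$, both negligible compared to the leading order of $\tilde{f}(x)$ for sufficiently large auxiliary $m$.

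Next I invoke the iterated functional equation \eqref{L-fe1} to write $F(1/s)=q^{N(N-1)/2}s^{-N}F(q^N/s)$ and Taylor-expand $F(q^N/s)$ around $s=r$ in the variable $u=1-r/s$:
\[
    F\!\left(\tfrac{q^N}{s}\right)=\sum_{0\le m<M}\frac{(-Q)^m}{m!}\,F^{(m)}(Q)\,u^m + R_M(s),
\]
where $|R_M(s)|\ll |u|^M$ uniformly for $s$ on the contour (using that $F$ is entire and that $q^N/s$ stays bounded in modulus in a neighborhood of $s=r$). Interchanging sum and integral and using the binomial expansion of $u^j$ together with the standard Hankel/Gamma identity $\tfrac{1}{2\pi i}\int_{r-i\infty}^{r+i\infty}e^{xs}s^{-N-j-1}\dd s = x^{N+j}/(N+j)!$ gives each term exactly as $\tfrac{x^N}{N!}T_m(N)$, matching the polynomial $T_m(N)$ displayed in the theorem.

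The main obstacle is to show that the remainder after truncating at level $M$ is strictly of smaller order than the last retained term. With $r$ a saddle-point of $e^{xs}s^{-N-1}$, the integral concentrates on $|t|\asymp 1/\sqrt{xr}\asymp r$, where $|u|=|it/(r+it)|=O(|t|/r)$; hence $|u|^M$ weighted against the Gaussian factor $e^{-xrt^2/2}$ arising from the Laplace expansion of $\log(s/r)$ contributes $O(N^{-\lceil M/2\rceil})$ relative to the leading term. Since $Q=q^{-\{\log_\kappa(1/r)\}}$ lies in the compact set $[1,1/q]$ and $F$ is entire, each $F^{(m)}(Q)$ is uniformly bounded. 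To control $R_M$ away from the saddle, on $c_m r<|t|\le 1$, I would use the pointwise majorization $|F(q^N/s)|\ll \vartheta(|q^N/s|)$ together with the functional equation $\vartheta(x)=x\vartheta(qx)$ already exploited in the proof of \eqref{f3}; this contribution is absorbed into the error committed when replacing $\tilde{f}^\star$ by $F$. Assembling these pieces yields a genuine asymptotic expansion in decreasing powers of $N\asymp\log_\kappa x$, exactly as claimed, with the same saddle-point/local-expansion machinery developed for \eqref{tfx-ae2}, only with the expansion point shifted from $t=0$ inside $F$ to $s=r$ outside $F$.
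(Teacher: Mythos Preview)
Your proposal is correct and follows exactly the route the paper itself sketches before the theorem (and whose justification the paper explicitly omits, saying only that it ``can be easily justified by the same local analysis as above''). There is one computational slip worth flagging: in your saddle-width estimate you write ``$|t|\asymp 1/\sqrt{xr}\asymp r$'' and ``$|u|=|it/(r+it)|=O(|t|/r)$'', which conflates the original variable $t$ with the rescaled variable $t\mapsto t/r$ used in the paper's derivation of $I_r(x)$; in the rescaled variable one has $u=it/(1+it)$ and the Gaussian width is $|t|\asymp 1/\sqrt{xr}=1/\sqrt{N}$, giving $|u|\asymp 1/\sqrt{N}$ directly (and $1/\sqrt{N}$ is certainly not $\asymp r$). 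Once this is cleaned up, your bound $|u|^M\Rightarrow O(N^{-\lceil M/2\rceil})$ and the tail control via $\vartheta$ go through exactly as you describe.
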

Straightforward calculations give (when $r=N/x$)
\begin{align*}
    \log\left(q^{\binom{N}2}\frac{x^N}{N!}\right)
    &= \frac{\left(\log\frac{x}{\log_\kappa x}\right)^2}
    {2\log\kappa}
    +\left(\frac1{\log\kappa}+\frac12\right)\log x
    -\log\log x  \\ &\quad -\frac12\log 2\pi-
    \frac{\eta^2+\eta}2 + O\left(\frac{(\log\log x)^2}{\log x}\right),
\end{align*}
consistent with what we proved in \eqref{tfx-ae3} via directly
applying the saddle-point method. For similar types of
approximation, see \cite{heller71a,mahler40a}.

\subsection{Exponential GFs vs ordinary GFs} \label{sec:ovse}

The different forms of the GFs of the sequence $\mu_n$ have several
interesting features which we now briefly explore.

Instead of $\tilde{f}^\star(s)$, we start with considering the usual
Laplace transform of $\tilde{f}(z)$
\[
    \mathscr{L}(s) = \int_0^\infty e^{-xs} \tilde{f}(x) \dd x,
\]
which, by (\ref{Ls-exact}), satisfies
\[
    \mathscr{L}(s) = \sum_{j\ge0} \frac{q^{\binom{j+1}{2}}}
    {s^{j+1}(s+q^j)}.
\]
By inverting this series, we obtain
\[
    \tilde{f}(z) = \sum_{j\ge0} \frac{q^{\binom{j+1}{2}}}{j!}
    z^{j+1}\int_0^1 e^{-q^j uz} (1-u)^j \dd u.
\]
From this exact expression, we deduce not only the exact expression
\eqref{mun-closed} but also the following one (by multiplying both
sides by $e^z$ and then expanding)
\begin{align} \label{mun-ce2}
    \mu_n = n\sum_{0\le j<n} \binom{n-1}{j}
    q^{\binom{j+1}{2}}\sum_{0\le \ell < n-j}
    \binom{n-1-j}{\ell} \frac{q^{j\ell}(1-q^j)^{n-1-j-\ell}}
    {j+\ell+1},
\end{align}
where all terms are now positive; compare \eqref{mun-closed}. But
this expression and \eqref{mun-closed} are less useful for numerical
purposes for large $n$.

On the other hand, the consideration of our $\tilde{f}^\star(s)$
bridges essentially EGF and OGF of $\mu_n$. Indeed,
\begin{align*}
    \tilde{f}^\star(s) &= \frac1s\int_0^\infty e^{-x-x/s}
    \sum_{n\ge0} \frac{\mu_n}{n!}\,x^n \dd x \\
    &=\frac1{1+s}\sum_{n\ge0} \mu_n \left(\frac{s}{1+s}\right)^n,
\end{align*}
which is essentially the Euler transform of the OGF; see
\cite{flajolet92a}.

Our proofs given above rely strongly on the use of EGF, but the use
of OGF works equally well for some of them. We consider the general
recurrence (\ref{an-rr}). Then the OGF $A(z) := \sum_{n\ge1} a_n
z^n$ satisfies
\[
    A(z) = zA(z) + \frac{z}{1-pz} A\left(\frac{qz}{1-pz}\right)
    +B(z),
\]
where $B(z) := \sum_{n\ge1} b_n z^n$. Thus $\bar{A}(z) := (1-z)A(z)$
satisfies
\[
    \bar{A}(z) = B(z) + \frac{z}{1-z} \bar{A}
    \left(\frac{qz}{1-pz}\right),
\]
which after iteration gives
\[
    \bar{A}(z) = \sum_{j\ge0} q^{j(j-1)/2}
    \left(\frac{z}{1-z}\right)^j B\left(\frac{q^jz}
    {1-(1-q^j)z}\right).
\]
Thus
\begin{align}
    A(z) = \sum_{j\ge0}\frac{q^{j(j-1)/2} z^j}
    {(1-z)^{j+1}} B\left(\frac{q^jz}
    {1-(1-q^j)z}\right). \label{Az-exact}
\end{align}
Closed-form expressions can be derived from this; we omit the
details here.

\section{Variance of $Y_n$}

We derive in this section the asymptotics of the variance $Y_n$ (see
\eqref{Yn-rr}), which can be regarded as a very rough independent
approximation to $X_n$. We use an elementary approach (no complex
analysis being needed) here based on the recurrences of the central
moments and suitable tools of ``asymptotic transfer'' for the
underlying recurrence. The approach is, up to the development of
asymptotic tools, by now standard; see \cite{hwang03a,hwang02a}. The
same analysis provided here is also applicable to higher central
moments, which will be analyzed in the next section.

\subsection{Recurrence} For the variance of $Y_n$, we start with the
recurrence (\ref{Yn-rr}), which translates into the recurrence
satisfied by the moment GF $M_n(y) := \mathbb{E}
\left(e^{Y_ny}\right)$
\[
    M_n(y) = M_{n-1}(y) \sum_{0\le j<n}
    \pi_{n,j} M_{j}(y)\qquad(n\ge2),
\]
with $M_0(y)=1$ and $M_1(y)=e^y$, where $\pi_{n,j} :=
\binom{n-1}{j}q^jp^{n-1-j}$. This implies, with $\bar{M}_n(y) :=
e^{-\mu_ny}M_n(y)= \mathbb{E}\left(e^{(Y_n-\mu_n)y}\right)$, that
\begin{align}\label{Mny-rr}
    \bar{M}_n(y) = \bar{M}_{n-1}(y) \sum_{0\le j<n}
    \pi_{n,j}\bar{M}_{j}(y) e^{\Delta_{n,j}y}\qquad(n\ge2),
\end{align}
with $\bar{M}_n(y)=1$ for $n<2$, where
\[
    \Delta_{n,j} := \mu_{j}+\mu_{n-1}-\mu_n.
\]
Let $M_{n,m} := \mathbb{E}(Y_n-\mu_n)^m=\bar{M}_n^{(m)}(0)$,
$m\ge0$. Then from (\ref{Mny-rr}), we deduce that
\begin{align}
    M_{n,m}=M_{n-1,m}+\sum_{0\le j<n}\pi_{n,j}M_{j,m}
    + T_{n,m}, \label{Mnm}
\end{align}
where, for $m\ge1$,
\begin{align}
    T_{n,m} &= \sum_{\substack{k+\ell+h=m\\
    0\le k,\ell<m\\0\le h\le m}} \binom{m}{k,\ell,h}M_{n-1,k}
    \sum_{0\le j<n} \pi_{n,j} M_{j,\ell} \Delta_{n,j}^h\notag \\
    &=\sum_{0\le \ell<m}\binom{m}{\ell}
    \sum_{0\le j<n}\pi_{n,j}M_{j,\ell}\Delta_{n,j}^{m-\ell}
    \notag \\
    &\quad+\sum_{2\le k\le m-2}\binom{m}{k}M_{n-1,k}
    \sum_{0\le \ell\le m-k}\binom{m-k}{\ell}
    \sum_{0\le j<n}\pi_{n,j}M_{j,\ell}\Delta_{n,j}^{m-k-\ell}.
    \label{Tnm}
\end{align}
Note that since $M_{n,1}=0$ and $\sum_{0\le j<n}\pi_{n,j}
\Delta_{n,j}=0$, terms with $k=1$ and $k=m-1$ vanish.

In particular, the variance $\sigma_n^2=M_{n,2}$ satisfies
\[
    \sigma_n^2= \sigma_{n-1}^2 + \sum_{0\le j<n}
    \pi_{n,j} \sigma_{j}^2 + T_{n,2},
\]
where
\[
    T_{n,2} = \sum_{0\le j<n} \pi_{n,j} \Delta_{n,j}^2.
\]

\subsection{Asymptotics of $T_{n,2}$}

To proceed further, we first consider the asymptotics of
$\Delta_{n,j}$ for $j=qn+O(n^{2/3})$. By Taylor expansion and
\eqref{tfz-fe}, we have
\begin{align*}
    \tilde{f}(n)-\tilde{f}(n-1) &=\tilde{f}'(n)
    -\frac{\tilde{f}''(n)}{2}+\frac{\tilde{f}'''(n)}{3!}
    +O\left(\int_0^1 (1-t)^4\tilde{f}^{(4)}(n-t)\dd t\right)\\
    &=\tilde{f}'(n) -\frac{\tilde{f}''(n)}{2}+
    \frac{\tilde{f}'''(n)}{3!}
    +O\left(\tilde{f}\left(q^4n\right)\right),
\end{align*}
and
\[
    \tilde{f}''(n)-\tilde{f}''(n-1)=
    \tilde{f}'''(n)+O\left(\tilde{f}\left(q^4n\right)\right).
\]
These and (\ref{mun-asymp3}) yield
\[
\begin{split}
    \mu_n-\mu_{n-1} &=\tilde{f}'(n)-\frac{\tilde{f}''(n)}{2}
    +O\left(n^2\tilde{f}\left(q^4n\right)\right)\\
    &=\tilde{f}(qn) +O\left(n^2\tilde{f}\left(q^4n\right)\right),
\end{split}
\]
since $\tilde{f}\left(q^2n\right) =O\left(n^2(\log
n)^{-2}\tilde{f}\left(q^4n\right)\right)$. Then, for
$j=qn+x\sqrt{pqn}$, $|x|\le n^{1/6}$,
\begin{align}
    \Delta_{n,j}&=\mu_{j}-(\mu_{n}-\mu_{n-1}) \notag \\
    &=\tilde{f}(qn+x\sqrt{pqn})
    -\tilde{f}(qn)
    +O\left(n^2\tilde{f}\left(q^4n\right)\right)\notag\\
    &=\tilde{f}'(qn)x\sqrt{pqn}
    +O\left(n^2(1+x^2)\tilde{f}\left(q^4n\right)\right).
    \label{D-asymp}
\end{align}
Thus, by (\ref{ratio-f1}) and (\ref{ratio-f2}),
\begin{align}
    T_{n,2}&=\sum_{|x|\le  n^{1/6}}\pi_{n,j}
    \left|\tilde{f}'(qn)x\sqrt{pqn}+
    O\bigl(n^2\tilde{f}(q^4n)\bigr)\right|^2
    +O\left(\mu_n^2\sum_{|x|>n^{1/6}}\pi_{n,j}\right)
    \notag \\
    &=pqn\tilde{f}'(qn)^2\sum_{|x|\le n^{1/6}}\pi_{n,j}
    \left|x\right|^2+O\left(n^{9/2}\tilde{f}^2
    \left(q^4n\right)\right)\notag \\
    &=pqn\tilde{f}'(qn)^2+O\left(n^{9/2}\tilde{f}^2
    \left(q^4n\right)\right)\notag\\
    &\sim q^{-1} p  n^{-3}(\log_\kappa n)^4\tilde{f}(n)^2.
    \label{Tn2-asymp}
\end{align}
The next step then is to ``transfer" this estimate to the
asymptotics of the variance.

\subsection{Asymptotic transfer}

We now develop an asymptotic transfer result, which will be used to
compute the asymptotics of higher central moments of $Y_n$ (in
particular the variance).

More generally, we consider a sequence $\{a_n\}_{n\ge0}$ satisfying
the recurrence relation
\begin{align}
    a_n=a_{n-1}+\sum_{0\le j<n}\pi_{n,j}a_{j}+b_n
    \qquad(n\ge1), \label{an-rr}
\end{align}
where $a_0$ is finite (whose value is immaterial) and
$\{b_n\}_{n\ge1}$ is a given sequence.
\begin{lem}\label{lmm-sum-bn}
If $b_n\sim n^\beta(\log n)^\xi \tilde{f}(n)^\alpha$, where
$\alpha>0$, $\beta, \xi\in \mathbb{R}$. Then
\[
    \sum_{j\le n} b_j\sim \frac{n}{\alpha\log_\kappa n}\,b_n.
\]
\end{lem}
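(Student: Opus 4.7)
The plan is to reduce $\sum_{j\le n} b_j$ to an incomplete beta-type sum via the substitution $j=nu$ and then approximate by an integral that evaluates to $nb_n/(\alpha\log_\kappa n)(1+o(1))$. The key fact is that the leading term of $\log\tilde{f}(x)$, namely $(\log x)^2/(2\log\kappa)$ (see Theorem~\ref{mun-asymp}), yields
\[
\log\frac{\tilde{f}(nu)}{\tilde{f}(n)} = \log_\kappa n\cdot\log u+O\bigl(\log\log n\cdot|\log u|+1\bigr)
\]
uniformly for $u\in(0,1]$ with $nu\to\infty$, and hence
\[
\frac{b_{j}}{b_{n}}=\left(\frac{j}{n}\right)^{\alpha\log_\kappa n+o(\log_\kappa n)} \qquad (j=nu,\;j\to\infty),
\]
the polynomial $(j/n)^\beta$ and logarithmic $[\log(nu)/\log n]^\xi$ prefactors being absorbed because their exponents are bounded.

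The computation then proceeds in two steps. First, split the sum at $j_0:=\tr{n^{1/2}}$; for $j<j_0$ the $b_j$ are bounded by the hypothesized asymptotic, so that range contributes $O(n^{1/2})=o(nb_n/\log_\kappa n)$ since $b_n$ grows faster than any polynomial by Theorem~\ref{mun-asymp}. For $j_0\le j\le n$, a Riemann-sum comparison reduces the partial sum to
\[
b_n\cdot n\int_0^1 u^{\alpha\log_\kappa n+o(\log_\kappa n)}\dd u\,(1+o(1))\sim \frac{nb_n}{\alpha\log_\kappa n},
\]
where the comparison is legitimate because the integrand has effective width $\sim 1/\log_\kappa n$, much larger than the discretization step $1/n$, and the integral is computed directly as $1/(m+1)$ with $m=\alpha\log_\kappa n+o(\log_\kappa n)$.

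The main obstacle is tracking the uniform $o(1)$ error in the $b_j/b_n$ approximation across the whole range $u\in[n^{-1/2},1]$: this requires invoking the full asymptotic~(\ref{tfx-ae3}) rather than only its dominant term, and needs an Euler--Maclaurin-type estimate to pass from Riemann sum to integral with a controlled error. Both ingredients are standard and parallel the saddle-point machinery developed in Section~\ref{sec:ec}, so no new tools are required; the proof is essentially a bookkeeping exercise of sub-leading terms, ensuring that the product of the bounded prefactor corrections integrates to $1+o(1)$ against the main $u^{\alpha\log_\kappa n}$ factor, which is concentrated in a window of width $1/\log_\kappa n$ near $u=1$.
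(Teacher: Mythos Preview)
Your approach is genuinely different from the paper's, but the key uniform estimate you rely on is wrong. You claim
\[
    \log\frac{\tilde{f}(nu)}{\tilde{f}(n)}
    =\log_\kappa n\cdot\log u
    +O\bigl(\log\log n\cdot|\log u|+1\bigr)
\]
uniformly for $u\in[n^{-1/2},1]$, but already the dominant term $(\log x)^2/(2\log\kappa)$ of $\log\tilde f(x)$ contributes an extra $\dfrac{(\log u)^2}{2\log\kappa}$ to the difference, which is \emph{not} absorbed by your error term once $|\log u|\gg\log\log n$. At $u=n^{-1/2}$ this correction is of order $(\log n)^2$, so the ``exponent'' in your expression $b_j/b_n=(j/n)^{\alpha\log_\kappa n+o(\log_\kappa n)}$ is off by a term of size $\Theta(\log_\kappa n)$, not $o(\log_\kappa n)$, on most of the range. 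Relatedly, the ``bounded prefactor corrections'' you invoke in the last paragraph are not bounded: the factor $\exp\bigl(\alpha(\log u)^2/(2\log\kappa)\bigr)$ blows up for small $u$. Your tail bound ``$\sum_{j<j_0}b_j=O(n^{1/2})$'' is also incorrect, since $b_j$ grows super-polynomially; the tail \emph{is} negligible, but because $b_{n^{1/2}}/b_n$ decays like $\exp(-c(\log n)^2)$, not because individual terms are $O(1)$. All of this can be repaired by a proper two-zone Laplace argument (localize to $|\log u|=O((\log_\kappa n)^{-1/2})$, where your approximation \emph{is} valid, and bound the rest crudely), but as written the proof does not stand.

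For comparison, the paper avoids the full asymptotics of $\tilde f$ entirely. It uses only the single relation $\tilde f'(t)/\tilde f(t)\sim t^{-1}\log_\kappa t$ from Corollary~\ref{lmm-ratio-f}: writing $\varphi(t)=t^\beta(\log t)^\xi\tilde f(t)^\alpha$, one replaces $\varphi(t)$ by $(\log\kappa)\,t^{\beta+1}(\log t)^{\xi-1}\tilde f(t)^{\alpha-1}\tilde f'(t)$ inside $\int_2^n\varphi(t)\,\mathrm{d}t$, recognizes $\alpha^{-1}\,\mathrm{d}(\tilde f^\alpha)$, and integrates by parts once. The boundary term is exactly $n\varphi(n)/(\alpha\log_\kappa n)$ and the remaining integral is $O(\varphi(n))$. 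This is shorter and needs no localization or saddle-point bookkeeping.
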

\begin{proof}
Define $\varphi(t) := t^\beta (\log t)^\xi\tilde{f}(t)^\alpha$. By
assumption, $b_n\sim \varphi(n)$. Since
$\tilde{f}'(t)/\tilde{f}(t)\sim t^{-1}\log_\kappa t$ (by
(\ref{ratio-f1})), we see that $\varphi'(t)>0$ for $t$ sufficiently
large, say $t\ge t_0>0$. Thus $\varphi(t)$ is monotonically
increasing for $t\ge t_0$. Then
\[
    \sum_{j\le n} b_j\sim\sum_{2\le j\le n}\varphi(j)
    = \int_2^n\varphi(t)\dd t+O(\varphi(n))
\]
By the asymptotic relation (\ref{ratio-f1}), we have
\[
\begin{split}
    \int_1^n\varphi(t)\dd t&=\int_1^n
    t^\beta (\log t)^\xi \tilde{f}(t)^\alpha\dd t \\ &\sim
    (\log\kappa)\int_1^n t^{\beta+1} (\log t)^{\xi-1}
    \tilde{f}(t)^{\alpha-1}\tilde{f}'(t) \dd t\\
    &\sim \frac{\log\kappa}\alpha
    \int_1^n t^{\beta+1}(\log t)^{\xi-1}
    \dd\tilde{f}(t)^\alpha \\ &=
    \frac{n \varphi(n)}{\alpha\log_\kappa n}+O\left(
    \int_1^n\frac{\varphi(t)}{t}\dd t\right),
\end{split}
\]
by an integration by parts. The integral on the right-hand side is
easily estimated as follows.
\begin{align*}
    \int_1^n\frac{\varphi(t)}{t}\dd t
    &=O\left(\varphi(qn) \int_1^{qn} t^{-1}
    \dd t + \varphi(n) \int_{qn}^n t^{-1} \dd t \right) \\
    &= O(\varphi(n)).
\end{align*}
This proves the lemma.
\end{proof}
\begin{prop}\label{prop-at}
If $b_n\sim  n^\beta (\log n)^\xi\tilde{f}(n)^\alpha$, where
$\alpha>1$, $\beta, \xi\in \mathbb{R}$, then
\begin{align}
    a_n=\left(1+O\left(n^{1-\alpha}(\log n)^{\alpha-1}
    \right)\right)\sum_{0\le j\le n} b_j
    \sim \frac{n}{\alpha \log_\kappa n}\,b_n. \label{anbn-at}
\end{align}
\end{prop}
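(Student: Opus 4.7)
The plan is to reduce Proposition~\ref{prop-at} to Lemma~\ref{lmm-sum-bn} by showing that the sum term in \eqref{an-rr} contributes only a lower-order correction. Writing $T_m := \sum_{0\le j<m}\pi_{m,j}a_j$ and $B_n := \sum_{1\le j\le n} b_j$, telescoping \eqref{an-rr} gives
\[
    a_n = a_0 + B_n + \sum_{m=1}^n T_m.
\]
Lemma~\ref{lmm-sum-bn} supplies the second asymptotic $B_n \sim \frac{n}{\alpha\log_\kappa n}b_n$, so what remains is the bound $\sum_{m=1}^n T_m = O(n^{1-\alpha}(\log n)^{\alpha-1})\,B_n$.

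The first step is to obtain a crude upper bound $a_n = O(B_n)$ by induction on $n$. The weights $\pi_{m,j}$ form a Binomial$(m-1,q)$ mass function concentrated around $qm$, so I would split $T_m$ at $|j-qm|\le m^{2/3}$, bound the Chernoff-type tail trivially against $a_j\le CB_j\le CB_m$, and control the bulk by monotonicity through $a_{\lfloor qm+m^{2/3}\rfloor}\le CB_{\lfloor qm+m^{2/3}\rfloor}$. Monotonicity follows inductively from $b_n>0$ eventually and nonnegativity of the $\pi_{m,j}a_j$. The key numerical input is that \eqref{ratio-f2} combined with Lemma~\ref{lmm-sum-bn} yields $B_{qm}/B_m=O(m^{-\alpha}(\log m)^\alpha)$, which is $o(b_m/B_m)$; hence the inductive step $a_m\le C B_m$ closes for any $C>1$ and all sufficiently large $m$.

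With the crude bound in hand, the same split sharpens to $T_m = O(B_{qm}) = O(m^{-\alpha}(\log m)^\alpha B_m)$. To sum this, I would exploit the super-polynomial growth of $B_m$ (of order $\tilde{f}(m)^\alpha\asymp e^{\alpha(\log m)^2/(2\log\kappa)}$ by \eqref{tfx-ae3}) and evaluate by a Laplace-type argument (substitute $u=\log m$ and recognise a one-sided Gaussian integrand), concluding
\[
    \sum_{m=1}^n m^{-\alpha}(\log m)^\alpha B_m \sim \frac{n}{\alpha\log_\kappa n}\cdot n^{-\alpha}(\log n)^\alpha B_n = O\!\left(n^{1-\alpha}(\log n)^{\alpha-1}\right)B_n,
\]
which is exactly the error target for \eqref{anbn-at}.

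The main obstacle is the bootstrap: the sharp estimate for $T_m$ requires a prior $O(B_j)$ bound on $a_j$, while propagating that bound through the induction itself rests on the same small ratio $B_{qm}/B_m$ that drives the sharp step, so I need to make both arguments quantitative in parallel. A secondary subtlety is justifying that replacing $a_j$ by $a_{\lfloor qm\rfloor}$ inside the bulk of $T_m$ costs only a multiplicative $(1+o(1))$; this calls for eventual monotonicity of $a_n$ together with a smoothness estimate for $B_n$ analogous to Lemma~\ref{lmm-ratio-f2} for $\tilde f$, which can be read off from $B_n\sim \tfrac{n}{\alpha\log_\kappa n}b_n$ and the prescribed form of $b_n$.
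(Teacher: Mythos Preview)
Your plan is correct and follows the paper's proof almost step for step: telescope to $a_n = a_0 + B_n + \sum_{m\le n} T_m$, establish a crude bound $a_n = O(B_n)$, then feed that back to get $T_m = O(m^{-\alpha}(\log m)^\alpha B_m)$ and sum. The one place the paper is tidier concerns exactly the bootstrap you flag as the ``main obstacle'': instead of a direct induction with a pre-chosen constant $C$ and a Chernoff split, the paper tracks the running maximum $C_n^* := \max_{1\le j\le n} a_j/B_j$ and reads off from the recurrence that $C_n^* \le C_{n-1}^*\bigl(1 + K n^{-\alpha}(\log n)^\alpha\bigr)$; since $\alpha>1$ the infinite product $\prod_{j\ge2}(1+Kj^{-\alpha}(\log j)^\alpha)$ converges, so $C_n^*$ is bounded outright. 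This device removes the need to balance constants or to invoke monotonicity of $a_n$ --- only the trivial monotonicity of $B_j$ is used. For the final summation the paper splits crudely at $qn$, whereas your reapplication of Lemma~\ref{lmm-sum-bn} to the sequence $m^{-\alpha}(\log m)^\alpha B_m \asymp m^{\beta+1-\alpha}(\log m)^{\xi+\alpha-1}\tilde f(m)^\alpha$ is an equally valid (and arguably cleaner) way to reach the same $O\bigl(n^{1-\alpha}(\log n)^{\alpha-1}\bigr)B_n$ bound.
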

\begin{proof}
We start with obtaining upper and lower bounds for $a_n$. Since
$b_n>0$ for sufficiently large $n$, say $n\ge n_0$. We may, without
loss of generality, assume that $b_n\ge0$ for $n\ge n_0$ (for,
otherwise, we consider $b_n':=b_n+\max_{j\le n_0}|b_j|$ and then
show the difference between the corresponding $a_n'$ and $a_n$ is of
order $\tilde{f}(n)$). Then $a_n\ge0$ and, by (\ref{an-rr}), we have
the lower bound
\[
    a_n \ge a_{n-1}+b_n \ge \sum_{0\le j\le n} b_j.
\]
Now consider the sequence
\[
    C_n := \frac{a_n}{\sum_{0\le j\le n} b_j}\ge 1\qquad(n\ge1),
\]
and the increasing sequence
\[
    C^*_n :=\max_{1\le  j\le  n}\{C_j\}\ge  1.
\]
Then we have the upper bound
\[
    a_k\le  C^*_n\sum_{0\le j\le k} b_j,
\]
for all $k\le n$.

In view of the recurrence relation (\ref{an-rr}), we have
\[
\begin{split}
    a_n&\le C^*_{n-1}\sum_{0\le j<n} b_j
    +C^*_{n-1}\sum_{0\le j<n}\pi_{n,j}
    \sum_{0\le \ell \le j} b_\ell+b_n \\
    &\le C^*_{n-1}\sum_{0\le j\le n} b_j
    +C^*_{n-1}\sum_{0\le j<n}\pi_{n,j}
    \sum_{0\le \ell \le j} b_\ell.
\end{split}
\]
By Lemma \ref{lmm-sum-bn} and Corollary \ref{lmm-ratio-f},
we see that there exist an absolute constant $K>0$ such that
\begin{equation}\label{sum_estim}
    \sum_{0\le j<n}\pi_{n,j}\sum_{0\le \ell \le j} b_\ell
    \le Kn^{-\alpha} (\log n)^\alpha\sum_{0\le j \le n}b_j
    =O\left(n^{1-\alpha}(\log n)^{\alpha-1} b_n\right).
\end{equation}
It follows that
\[
    a_n\le C^*_{n-1}\left(1+K n^{-\alpha}(\log n)^\alpha
    \right)\sum_{0\le j\le n} b_j.
\]
By our definition of $C_n$, we then have
\[
    C_n\le  C^*_{n-1}\left(1+Kn^{-\alpha}(\log n)^\alpha\right),
\]
and
\[
    C^*_n=\max\{C^*_{n-1},C_n\}\le  C^*_{n-1}
    \left(1+Kn^{-\alpha}(\log n)^\alpha\right).
\]
Consequently,
\[
    C^*_n\le  C^*_2\prod_{2\le j\le n}
    \left(1+Kj^{-\alpha}(\log j)^\alpha\right).
\]
Since the finite product on the right-hand side is convergent, we
conclude that the sequence $C^*_n$ is bounded, or more precisely,
\[
    C^*_n\le  C^*_2\prod_{j\ge 2}
    \left(1+Kj^{-\alpha}(\log j)^\alpha\right).
\]
Thus we obtain the upper bound
\[
    a_n\le  C\sum_{0\le j\le n}b_j,
\]
where $C>0$ is an absolute constant depending only on $p,
\alpha, \beta$ and $\xi$.

With this bound and defining $\tilde{a}_n := \sum_{0\le j<n}
\pi_{n,j}a_{j}$, we can rewrite the recurrence relation
(\ref{an-rr}) as
\begin{align}
    a_n &=a_{n-1}+\tilde{a}_n+b_n \notag \\
    &=\sum_{0\le j\le n} b_j+\sum_{0\le k\le n}
    \tilde{a}_k. \label{an-rr2}
\end{align}

Now by the estimate (\ref{sum_estim}), we see that
\begin{align*}
    \sum_{0\le j\le n}\tilde{a}_j&=O\left(1+
    \sum_{2\le j\le n} j^{1-\alpha}(\log j)^{\alpha-1}
    b_j \right)\\ &= O\left(1+ \varphi(qn)
    \sum_{2\le  j\le qn} j^{1-\alpha}(\log j)^{\alpha-1}
    +n^{1-\alpha}(\log n)^{\alpha-1}\sum_{qn<j\le  n}b_j
    \right),
\end{align*}
where $\varphi(t) := t^\beta (\log t)^\xi\tilde{f}(t)^\alpha$.
Observe that
\[
    \varphi(qn)\sim  n^{-\alpha}(\log n)^\alpha b_n\sim
    n^{-\alpha-1}(\log n)^{\alpha+1}
    \sum_{0\le j\le n}b_j.
\]
Thus
\[
    \sum_{0\le j\le n} \tilde{a}_j =O\left(n^{1-\alpha}
    (\log n)^{\alpha-1}\sum_{0\le j\le n}b_j\right).
\]
The proof of the Proposition is complete by substituting this
estimate into \eqref{an-rr2}.
\end{proof}

Denote by $[z^n]A(z)$ for the coefficient of $z^n$ in the Taylor
expansion of $A(z)$. Then, in terms of ordinary GFs, the asymptotic
transfer (\ref{anbn-at}) can be stated alternatively as
\[
    [z^n] A(z) \sim [z^n]\frac{B(z)}{1-z},
\]
(when $b_n$ satisfies the assumption of Proposition~\ref{prop-at}),
which means that the contribution from terms in the sum in
(\ref{Az-exact}) with $j\ge1$ is asymptotically negligible. Roughly,
since
\[
    b_{n,j} := [z^n]B\left(\frac{q^jz}{1-(1-q^j)z}\right)
    = n^{-1}\sum_{1\le \ell \le n} \binom{n}{\ell} q^{j\ell}
    (1-q^j)^{n-\ell} \ell b_\ell,
\]
we see that $b_{n,j} = O(q^j b_{\lfloor{q^jn}\rfloor})$. We can then
give an alternative proof of (\ref{anbn-at}) by using
(\ref{Az-exact}).

By (\ref{Tn2-asymp}) and a direct application of
Proposition~\ref{prop-at}, we obtain an asymptotic approximation to
the variance.
\begin{thm} The variance of $Y_n$ satisfies
\begin{align}
    \sigma_n^2 \sim C_\sigma n^{-2} (\log_\kappa n)^3 \tilde{f}(n)^2,
    \label{var-asymp}
\end{align}
where $C_\sigma := p/(2q)$.
\end{thm}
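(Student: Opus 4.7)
The plan is a direct application of the asymptotic transfer result (Proposition~\ref{prop-at}) to the recurrence for the variance $\sigma_n^2 = M_{n,2}$ derived just above the theorem. Setting $a_n := \sigma_n^2$ and $b_n := T_{n,2}$, the recurrence
\[
\sigma_n^2 = \sigma_{n-1}^2 + \sum_{0\le j<n}\pi_{n,j}\,\sigma_j^2 + T_{n,2}
\]
is precisely of the form \eqref{an-rr}, so Proposition~\ref{prop-at} applies as soon as we verify that $b_n = T_{n,2}$ has the prescribed shape $n^\beta (\log n)^\xi \tilde{f}(n)^\alpha$ with $\alpha > 1$.

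That verification is exactly the content of \eqref{Tn2-asymp}: the linearization $\Delta_{n,j} = \tilde{f}'(qn)\,x\sqrt{pqn} + O(n^2(1+x^2)\tilde{f}(q^4n))$ valid in the central range $j = qn + x\sqrt{pqn}$, $|x|\le n^{1/6}$, combined with the second-moment identity $\sum_{|x|\le n^{1/6}}\pi_{n,j}\,x^2 \to 1$ for the binomial weights and the ratio estimates of Corollary~\ref{lmm-ratio-f}, yields
\[
T_{n,2} \sim pq\,n\,\tilde{f}'(qn)^2 \sim q^{-1}p\,n^{-3}(\log_\kappa n)^4\,\tilde{f}(n)^2,
\]
so the hypotheses are met with $\alpha = 2$, $\beta = -3$, and $\xi = 4$.

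Plugging $\alpha=2$ into the transfer conclusion $a_n \sim n\,b_n/(\alpha\log_\kappa n)$ of Proposition~\ref{prop-at} then gives
\[
\sigma_n^2 \sim \frac{n}{2\log_\kappa n}\cdot q^{-1}p\,n^{-3}(\log_\kappa n)^4\,\tilde{f}(n)^2 = \frac{p}{2q}\,n^{-2}(\log_\kappa n)^3\,\tilde{f}(n)^2,
\]
which is the claimed estimate with $C_\sigma = p/(2q)$. Because the genuinely delicate analytic input—the sharp behaviour of $\tilde{f}$ and its derivatives, the ratios \eqref{ratio-f1}--\eqref{ratio-f2}, and the tail bound $n^{9/2}\tilde{f}(q^4n)^2 = o(pq\,n\,\tilde{f}'(qn)^2)$ that absorbs the error from large-deviation values of $j$—has already been established in deriving \eqref{Tn2-asymp}, there is no real obstacle left. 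The only step that required actual computation, namely the refined Taylor expansion of $\Delta_{n,j}$ around $j = qn$ together with the cancellation $\mu_n - \mu_{n-1} = \tilde{f}(qn) + O(n^2\tilde{f}(q^4n))$ coming from \eqref{tfz-fe}, is already in hand, so the theorem follows as a direct corollary of the transfer machinery.
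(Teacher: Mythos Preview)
Your proposal is correct and follows exactly the paper's own argument: apply Proposition~\ref{prop-at} with $a_n=\sigma_n^2$, $b_n=T_{n,2}$, using the asymptotic \eqref{Tn2-asymp} (so $\alpha=2$, $\beta=-3$, $\xi=4$), and read off $\sigma_n^2\sim \tfrac{n}{2\log_\kappa n}\,T_{n,2}=\tfrac{p}{2q}\,n^{-2}(\log_\kappa n)^3\tilde{f}(n)^2$. The paper states this in a single sentence; your write-up simply unpacks the same steps.
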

Thus we have
\[
    \frac{\mathbb{V}(Y_n)}{(\mathbb{E}(Y_n))^2}
    \sim C_\sigma n^{-2} (\log n)^3.
\]

Monte Carlo simulations (with $n$ a few hundred) suggest that the
ratio $\mathbb{V}(X_n)/\mathbb{V}(Y_n)$ grows concavely, so that one
would expect an order of the form $n^\beta(\log n)^{\xi}$ for some
$0<\beta<1$. But due to the complexity of the problem, we could not
run simulations of larger samples to draw more convincing
conclusions. Asymptotics of $\mathbb{V}(X_n)$ remains open.

\section{Asymptotic normality}

We prove in this section that $Y_n$ is asymptotically normally
distributed by the method of moments. Our approach is to start from
the recurrence (\ref{Mnm}) for the central moments and the
asymptotic estimate (\ref{var-asymp}) and then to apply inductively
the asymptotic transfer result (Proposition~\ref{prop-at}), similar
to that used in our previous papers \cite{hwang03a,hwang02a}.

\begin{thm} The distribution of $Y_n$ is asymptotically normal,
namely,
\[
    \frac{Y_n-\mu_n}{\sigma_n} \stackrel{d}{\to}
    \mathscr{N}(0,1),
\]
where $\stackrel{d}{\to}$ denotes convergence in distribution.
\end{thm}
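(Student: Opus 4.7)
I plan to use the method of moments with strong induction on the moment order. By the Fr\'echet--Shohat theorem it suffices to show that for every $m\ge 1$,
\[
    \lim_{n\to\infty}\frac{M_{n,m}}{\sigma_n^m}=m_m,
\]
where $m_m=(m-1)!!$ for even $m$ and $m_m=0$ for odd $m$ are the moments of $\mathscr{N}(0,1)$. The cases $m=0,1,2$ are immediate (trivial, by centering, and by \eqref{var-asymp}). For $m\ge 3$ I would drive the induction step by combining the central moment recurrence \eqref{Mnm}--\eqref{Tnm} with the asymptotic transfer Proposition~\ref{prop-at}.

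To carry out the inductive step I first estimate $T_{n,m}$. The binomial weight $\pi_{n,j}$ concentrates on $j=qn+O(\sqrt{n})$, and \eqref{D-asymp} gives $\Delta_{n,j}\sim \tilde{f}'(qn)(j-qn)$ in that window; hence the classical centered binomial moment asymptotics yield
\[
    \sum_{0\le j<n}\pi_{n,j}\Delta_{n,j}^h
    \sim (h-1)!!\,(pqn)^{h/2}\,\tilde{f}'(qn)^h
    \qquad(h \text{ even}),
\]
while for odd $h$ the same sum is $O\bigl(n^{\lfloor h/2\rfloor}\tilde{f}'(qn)^h\bigr)$ by Gaussian cancellation. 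Inserting the induction hypothesis $M_{n-1,k}\sim m_k\sigma_n^k$ and $M_{j,\ell}\sim m_\ell\sigma_j^\ell$ and comparing orders via \eqref{ratio-f1}--\eqref{ratio-f2} and \eqref{var-asymp}, the $(k,\ell,h)=(m-2,0,2)$ summand of the second sum in \eqref{Tnm} emerges as the unique dominant contribution:
\[
    T_{n,m}\sim \binom{m}{2}\,M_{n-1,m-2}\,T_{n,2}
    \sim \binom{m}{2}\,m_{m-2}\,\sigma_n^{m-2}\,T_{n,2}.
\]
This is of the form $n^\beta(\log n)^\xi\tilde{f}(n)^m$ with $\alpha=m>1$, so Proposition~\ref{prop-at} yields
\[
    M_{n,m}\sim \frac{n}{m\log_\kappa n}\,T_{n,m}
    \sim \binom{m}{2}\,m_{m-2}\,\sigma_n^{m-2}\cdot\frac{n}{m\log_\kappa n}\,T_{n,2}
    =(m-1)\,m_{m-2}\,\sigma_n^m,
\]
using the identity $\sigma_n^2\sim(n/(2\log_\kappa n))\,T_{n,2}$ obtained in the derivation of \eqref{var-asymp}. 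Since $(m-1)\,m_{m-2}=m_m$ in both parities, the induction closes and the method of moments delivers the asymptotic normality.

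The main technical obstacle is the odd-$m$ step: there $m_{m-2}=0$, so the apparent ``leading'' contribution to $T_{n,m}$ vanishes and one must prove a genuine upper bound $T_{n,m}=o(\sigma_n^{m-2}T_{n,2})$. This calls for careful uniform bookkeeping of several sub-leading terms in \eqref{Tnm}: the first-sum contribution with $\ell=m-1$, where the induction only supplies $M_{j,m-1}=o(\sigma_j^{m-1})$ without a precise asymptotic; the odd-order Taylor corrections to \eqref{D-asymp}; and triples $(k,\ell,h)$ with $h\ge 3$ or $k<m-2$, whose orders have to be compared explicitly against $\sigma_n^{m-2}T_{n,2}$ by means of \eqref{ratio-f1}--\eqref{ratio-f2} and Lemma~\ref{lmm-ratio-f2}. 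Once these bounds are in hand, Proposition~\ref{prop-at} applied to the resulting $o$-estimate for $T_{n,m}$ gives $M_{n,m}=o(\sigma_n^m)$, as required to propagate the induction through odd orders.
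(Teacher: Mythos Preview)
Your proposal is correct and follows essentially the same route as the paper: method of moments via the central-moment recurrence \eqref{Mnm}--\eqref{Tnm}, identification of $(k,\ell,h)=(m-2,0,2)$ as the dominant term in $T_{n,m}$, and asymptotic transfer (Proposition~\ref{prop-at}) to close the induction $(m-1)m_{m-2}=m_m$. The paper is in fact terser than you are on the odd-$m$ step (it simply says ``in a similar manner''); your remark that Proposition~\ref{prop-at} is stated for $\sim$ rather than $o$ is valid, but the upper-bound half of its proof immediately yields the $O$-version needed here.
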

We will indeed prove convergence of all moments.
\begin{proof}
By standard moment convergence theorem, it suffices to show that
\begin{align}
    M_{n,m}=\mathbb{E}(Y_n-\mu_n)^m
    \begin{cases}
        \displaystyle\sim \frac{(m)!}{(m/2)!2^{m/2}}\,
        \sigma_n^m,& \text{if $m$ is even},\\
        =o(\sigma_n^m),&\text{if $m$ is odd},
    \end{cases}\label{mm}
\end{align}
for $m\ge0$.

The cases when $m\le2$ having been proved above, we assume $m\ge3$.
By induction hypothesis, we have
\[
    M_{n,k}=O\left(\sigma_n^k\right)
    =O\left(n^{-k}(\log n)^{3k/2}\tilde{f}^k(n)\right),
\]
for $k<m$. Then, by (\ref{D-asymp}),
\[
\begin{split}
    \sum_{0\le j<n}\pi_{n,j}M_{j,\ell}\Delta_{n,j}^h
    &=O\left( M_{\tr{qn},\ell} n^{h/2}\tilde{f}(q^2n)^h\right)
    \\&=O\left(n^{-\ell}(\log n)^{3\ell/2}\tilde{f}(qn)^\ell
    n^{h/2}\tilde{f}(q^2n)^h\right) \\
    &=O\left(n^{-2\ell-3h/2}(\log n)^{5\ell/2+2h}
    \tilde{f}(n)^{\ell+h}\right).
\end{split}
\]
It follows (see (\ref{Tnm})) that, for $0\le \ell<m$,
\[
    \sum_{0\le j<n}\pi_{n,j}M_{j,\ell}\Delta_{n,j}^{m-\ell}
    =O\left( n^{-\ell/2-3m/2}(\log n)^{\ell/2+2m}\tilde{f}(n)^m
    \right);
\]
and, for $2\le k\le m-2$ and $0\le \ell\le m-k$,
\[
    M_{n-1,k}\sum_{0\le j<n}\pi_{n,j}M_{j,\ell}
    \Delta_{n,j}^{m-k-\ell}
    =O\left(n^{-\ell/2+k/2-3m/2}
    (\log n)^{\ell/2-k/2+2m}\tilde{f}(n)^m\right).
\]
Thus the main contribution to the asymptotics of $T_{n,m}$ will come
from the terms in the second group of sums in (\ref{Tnm}) with
$k=m-2$ and $\ell=0$. More precisely
\[
\begin{split}
    T_{n,m}& = \binom{m}{2}M_{n-1,m-2}T_{n,2}
    +O\left(n^{-3/2-m}(\log n)^{3(m+1)/2}
    \tilde{f}(n)^m\right).
\end{split}
\]
Note that $T_{n,2} \sim 2 n(\log_\kappa n)^{-1}\sigma_n^2 $;
see \eqref{Tn2-asymp}.

Thus if $m$ is even, then, by (\ref{Tn2-asymp}) and induction
hypothesis,
\begin{align*}
    T_{n,m} &\sim \frac{2 m!}{\bigl((m-2)/2\bigr)!2^{m/2}}
    \, n^{-1}(\log_\kappa n) \sigma_n^m\\
    &\sim  \frac{2m!}{\bigl((m-2)/2\bigr)!2^{m/2}}\,
    C_\sigma^{m/2} n^{-m-1}(\log_\kappa n)^{(3m/2+1)}\tilde{f}(n)^{m}.
\end{align*}
Applying the asymptotic transfer result (Proposition~\ref{prop-at})
with $\alpha=m$, we obtain
\begin{align*}
    M_{n,m}&\sim \frac{m!}{(m/2)!2^{m/2}}\,C_\sigma^{m/2}
    n^{-m} (\log n)^{3m/2}\tilde{f}(n)^{m}\\
    &\sim \frac{m!}{(m/2)!2^{m/2}}\,\sigma_n^m.
\end{align*}
In a similar manner, we can prove that if $m$ is odd, then
\[
    M_{n,m}=o(\sigma_n^m).
\]
This concludes the proof of (\ref{mm}) and the asymptotic normality
of $Y_n$.
\end{proof}

\section{The random variables $Z_n$}

We briefly consider the random variables defined recursively in
\eqref{Zn-rr}. The major interest is in understanding the robustness
of the asymptotic normality when changing the underlying probability
distribution from binomial to uniform.
\begin{thm} The mean value of $Z_n$ satisfies
\begin{align} \label{EZn}
    \mathbb{E}(Z_n) = C n^{-1/4} e^{2\sqrt{n}}
    \left(1+\frac9{16\sqrt{n}}+\frac{11}{1536n}+
    O\left(n^{-3/2}\right)\right),
\end{align}
where
\[
    C := \frac12\sqrt{\frac e\pi}\int_0^1 \left(1-\frac1v\right)
    e^{-v} \dd v \approx 0.06906\,46192\dots
\]
The limit law of the normalized random variables $Z_n/\mathbb{E}
(Z_n)$ is not normal
\[
    \frac{Z_n}{\mathbb{E}(Z_n)}
    \stackrel{d}{\to} Z,
\]
where the distribution of $Z$ is uniquely characterized by its
moment sequence and the GF $\zeta(y) := \sum_{m\ge1} \mathbb{E}
(Z^m)y^m/(m\cdot m!)$ satisfies the nonlinear differential equation
\begin{align} \label{zeta}
    y^2\zeta'' +y\zeta'-\zeta = y\zeta \zeta',
\end{align}
with $\zeta(0)=\zeta'(0)=1$.
\end{thm}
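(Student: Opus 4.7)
The plan for the mean $e_n:=\mathbb{E}(Z_n)$ is to work with the ordinary generating function $F(x):=\sum_{n\ge 0}e_nx^n$. The recurrence $n(e_n-e_{n-1})=\sum_{\ell<n}e_\ell$ translates into the first-order linear ODE $(1-x)^2F'(x)=(1-x)+(2-x)F(x)$; its unique solution with $F(0)=0$, obtained by variation of parameters from the homogeneous solution $e^{1/(1-x)}/(1-x)$, is
\begin{equation*}
F(x) = \frac{e^{1/(1-x)}}{1-x}\int_{1-x}^{1}e^{-1/s}\,\dd s.
\end{equation*}
The integral factor is analytic at $x=1$ and, by Laplace's method, differs from $\int_{0}^{1}e^{-1/s}\,\dd s$ by $O((1-x)^2e^{-1/(1-x)})$, which contributes nothing asymptotically to $[x^n]F(x)$. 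I would then extract $[x^n]F(x)$ by Hayman's method applied to the model $e^{1/(1-x)}/(1-x)$: solving $r_nf'(r_n)/f(r_n)=n$ gives $r_n=1-(n+1)^{-1/2}$, and the saddle-point formula produces the scale $n^{-1/4}e^{2\sqrt n}$ with leading constant identifiable as the stated~$C$. The subleading coefficients $9/(16\sqrt n)$ and $11/(1536n)$ come from carrying the Taylor expansion of the phase at the saddle two further orders---a routine but lengthy computation.

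For the limit law I would use the method of moments. Writing $M_k(n):=\mathbb{E}(Z_n^k)$ and expanding the distributional recurrence \eqref{Zn-rr} (the two summands being independent) gives
\begin{equation*}
M_k(n)=\sum_{j=0}^{k}\binom{k}{j}M_{k-j}(n-1)\cdot\frac{1}{n}\sum_{\ell=0}^{n-1}M_j(\ell).
\end{equation*}
The claim is that $\mu_k:=\lim_{n\to\infty}M_k(n)/e_n^k$ exists for each $k\ge 0$, with $\mu_0=\mu_1=1$, and satisfies the quadratic recurrence
\begin{equation*}
k\mu_k=\sum_{j=1}^{k}\binom{k}{j}\frac{\mu_j\mu_{k-j}}{j}.
\end{equation*}
This is proved by induction on $k$. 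The key estimate---obtained by applying Laplace's method to $\int^n t^{-j/4}e^{2j\sqrt t}\,\dd t$---is $(1/n)\sum_{\ell<n}e_\ell^j\sim e_n^j/(j\sqrt n)$ for $j\ge 1$, together with $e_n-e_{n-1}\sim e_n/\sqrt n$ (a direct consequence of \eqref{EZn}). Matching $M_k(n)-M_k(n-1)\sim k\mu_k e_n^{k-1}(e_n-e_{n-1})$ against the $j\ge 1$ sum in the expanded recurrence produces exactly the stated relation. An inductive asymptotic-transfer lemma, analogous to Proposition~\ref{prop-at} but calibrated to the base scale $e_n^kn^{-1/2}$, supplies the uniform error control.

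To pass from the moment recurrence to the ODE~\eqref{zeta}, one writes $\zeta(y)=\sum_{m\ge 1}\mu_my^m/(m\cdot m!)$ and verifies termwise that $y^2\zeta''+y\zeta'-\zeta=\sum_{m\ge 2}(m^2-1)\mu_my^m/(m\cdot m!)$, while the Cauchy product gives $y\zeta\zeta'=\sum_{m\ge 2}y^m\sum_{j=1}^{m-1}\mu_j\mu_{m-j}/(j\cdot j!\,(m-j)!)$; equating coefficients recovers exactly $(m^2-1)\mu_m=m\sum_{j=1}^{m-1}\binom{m}{j}\mu_j\mu_{m-j}/j$, which is the moment recurrence with the self-term $j=k$ isolated. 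The initial values are read off the series, with $\zeta'(0)=\mu_1=1$ coming from the normalization $\mathbb{E}(Z)=1$. Determinacy of the moment problem---needed to upgrade moment convergence to distributional convergence---would follow from a growth bound $\mu_k=O(c^kk!)$ obtained inductively from the recurrence, giving Carleman's condition. The main obstacle I anticipate is the inductive asymptotic-transfer step for $M_k(n)$: the uniform averaging kernel $(1/n)\sum_{\ell<n}$ damps errors only by $n^{-1/2}$ per iteration, much weaker than the binomial kernels handled in the previous sections, so subdominant contributions must be tracked uniformly in~$k$ to close the induction at every order.
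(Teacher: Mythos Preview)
Your proposal is correct and follows essentially the same route as the paper: the same first-order ODE for the ordinary generating function of $\mathbb{E}(Z_n)$ solved in closed form and analyzed by the saddle-point method, and the same method-of-moments scheme for the limit law, with an asymptotic-transfer lemma (the paper states it as Proposition~\ref{prop-at2}: if $b_n\sim cn^{\beta}\nu_n^{\alpha}$ with $\alpha>1$ in the recurrence $a_n=a_{n-1}+\tfrac1n\sum_{j<n}a_j+b_n$, then $a_n\sim \tfrac{c}{\alpha-\alpha^{-1}}n^{\beta+1/2}\nu_n^{\alpha}$) yielding exactly your moment recurrence $(m^2-1)\mu_m=m\sum_{j=1}^{m-1}\binom{m}{j}\mu_j\mu_{m-j}/j$ and hence~\eqref{zeta}. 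Your closed form $F(x)=\tfrac{e^{1/(1-x)}}{1-x}\int_{1-x}^{1}e^{-1/s}\,\dd s$ is equivalent to the paper's (and arguably cleaner), and your growth bound $\mu_k=O(c^kk!)$ for Carleman determinacy is the same as the paper's.
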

\begin{proof} (Sketch)
The proof the theorem is simpler and we sketch only the major steps.

\paragraph{Mean value.}
First, $\nu_n := \mathbb{E}(Z_n)$ satisfies the recurrence
\[
    \nu_n = \nu_{n-1}+\frac1n\sum_{0\le j<n}\nu_j \qquad(n\ge2),
\]
with $\nu_0=0$, and $\nu_1=1$. The GF $f(z)$ of $\mathbb{E}(Z_n)$
satisfies the differential equation
\[
    f' = \frac{2-z}{(1-z)^2}\,f + \frac{1}{1-z},
\]
with the initial condition $f(0)=0$. Surprisingly, this same
equation (and the same sequence $\{\nu_nn!\}_n$, which is
\href{http://oeis.org/A005189}{A005189} in Encyclopedia of Integer
sequences) occurs in the study of two-sided generalized Fibonacci
sequences; see \cite{fishburn88a,fishburn89a}. The first-order
differential equation is easily solved and we obtain the closed-form
expression
\begin{align*}
    f(z) = \frac{z}{1-z}+\frac{e^{1/(1-z)}}{1-z}\int_0^{1/(1-z)}
    \left(1-\frac1v\right)e^{-v}\dd t.
\end{align*}
From this, the asymptotic approximation \eqref{EZn} results from a
direct application of the saddle-point method (see
Flajolet and Sedgewick's book \cite[Ch.\ VIII]{flajolet09a});
see also \cite{fishburn89a}.

\paragraph{Asymptotic transfer.}
For higher moments and the limit law, we are led to consider the
following recurrence.
\begin{align}\label{an-bn-unif}
    a_n = a_{n-1}+\frac1n\sum_{0\le j<n}a_j + b_n\qquad(n\ge2),
\end{align}
with $a_0$ and $a_1$ given. For simplicity, we assume $a_0=b_0=0$.

\begin{prop} \label{prop-at2}
Assume $a_n$ satisfies \eqref{an-bn-unif}. If $b_n \sim c n^\beta
\nu_n^\alpha$, where $\alpha>1$ and $\beta\in\mathbb{R}$, then
\begin{align}\label{anunif-asymp}
    a_n \sim \frac{c}{\alpha-\alpha^{-1}}
    \, n^{\beta+1/2} \nu_n^\alpha.
\end{align}
\end{prop}
The proof is similar to that for Proposition~\ref{prop-at} and is
omitted.

\paragraph{Recurrence and induction.}
By Proposition~\ref{prop-at2} and the following recurrence relation
for the moment GF $Q(y) := \mathbb{E}(e^{Z_ny})$
\[
    Q_n(y) = \frac{Q_{n-1}(y)}{n}\sum_{0\le j<n} Q_j(y)\qquad
    (n\ge2),
\]
with $Q_0(y)=1$ and $Q_1(y)=e^y$, we deduce, by induction using
\eqref{anunif-asymp}, that
\[
    \mathbb{E}(Z_n^m) \sim \zeta_m \nu_n^m \qquad(m\ge1),
\]
where
\begin{align} \label{zeta-m}
    \zeta_m = \frac{1}{m-m^{-1}}\sum_{1\le j<m}\binom{m}{j}
    \frac{\zeta_j}{j}\, \zeta_{m-j}\qquad(m\ge2),
\end{align}
with $\zeta_0=\zeta_1=1$. It follows that the function $\zeta(y) :=
\sum_{m\ge1}\zeta_m y^m/(m\cdot m!)$ satisfies the differential
equation \eqref{zeta}.

\paragraph{Unique determination of the distribution.}
First, by a simple induction we can show, by \eqref{zeta-m}, that
$\zeta_m\le c m! K^m$ for a sufficiently large $K>0$. This is enough
for justifying the unique determination. Instead of giving the
details, it is more interesting to note that the nonlinear
differential equation \eqref{zeta} represents another typical case
for which the asymptotic behavior of its coefficients
($\mathbb{E}(Z^m)$ for large $m$) necessitates the use of the
psi-series method recently developed in \cite{chern12a}. We can show,
by the approach used there, that
\[
    \mathbb{E}(Z^m) = m\cdot m!\rho^{-m}\left(2+
    \frac{2}{3m^2}+ O\left(m^{-3}\right)\right),
\]
where $\rho>0$ is an effectively computable constant. Note that
there is no term of the form $m^{-1}$ in the expansion, a typical
situation when psi-series method applies; see \cite{chern12a}.
\end{proof}

\section*{Concluding remarks}
\label{sec:cr}

The approach we used in this paper is of some generality and is
amenable to other quantities. We conclude this paper with a few
examples and a list of some concrete applications where the scale
$n^{c\log n}$ also appears.

First, the expected number of independent sets in a random graph
(under the $\mathscr{G}_{n,p}$ model), as given in \eqref{Jn},
satisfies the recurrence ($\bar{J}_n := J_n+1$)
\[
    \bar{J}_n = \bar{J}_{n-1} + \sum_{0\le k<n}
    \binom{n-1}{k} q^k p^{n-1-k} \bar{J}_k \qquad(n\ge1),
\]
with $\bar{J}_0=1$. Thus the Poisson GF $\tilde{f}(z) := e^{-z}
\sum_{n\ge0}\bar{J}_n z^n/n!$ satisfies the equation
\[
    \tilde{f}'(z) = \tilde{f}(qz),
\]
with $\tilde{f}(0)=1$. The modified Laplace transform then satisfies
the functional equation
\[
    \tilde{f}^\star(s) = 1 + s\tilde{f}^\star(qs),
\]
which, by iteration, leads to the closed-form expression
\[
    \tilde{f}^\star(s) = \sum_{j\ge0} q^{j(j-1)/2} s^{j}.
\]
Thus all analysis as in Section~\ref{sec:ec} applies with $F$ and
$G$ there replaced by
\[
    F(s) := \sum_{j\in\mathbb{Z}} q^{j(j-1)/2} s^j,
    \quad G(u) := q^{(\{u\}^2+\{u\})/2}F\left(q^{-\{u\}}\right).
\]
We obtain for example
\[
    J_n= \frac{G\left(\log_\kappa\frac{n}
    {\log_\kappa n}\right)}{\sqrt{2\pi}}\cdot
    \frac{n^{1/\log\kappa+1/2}}{\log_\kappa n}
    \,\exp\left(\frac{\left(
    \log \frac{n}{\log_\kappa n}\right)^2}
    {2\log\kappa}\right)
    \left(1+O\left(\frac{(\log\log n)^2}{\log n}\right)\right).
\]

The same approach also applies to the pantograph equation
\[
    \Phi'(z) = a\Phi(qz)+ \Psi(z)\qquad(a>0),
\]
with $\Phi(0)$ and $\Psi(z)$ given, for $\Psi(z)$ satisfying
properties that can be easily imposed.

Other extensions will be discussed elsewhere. We conclude with some
other algorithmic, combinatorial and analytic contexts where
$n^{c\log n}$ appears.
\begin{itemize}

\item[--] Algorithmics: isomorphism testing (see
\cite{babai12a,grosek10a,huber11a,miller78a,rosenbaum12a}),
autocorrelations of strings (see \cite{guibas81a,rivals03a}),
information theory (see \cite{abu-mostafa86a}), random digital
search trees (see \cite{drmota09a}), population recovery
(see \cite{wigderson12a}), and asymptotics of
recurrences (see \cite{knuth66a,oshea04a});

\item[--] Combinatorics: partitions into powers (see
\cite{debruijn48a,mahler40a}; see also \cite{fredman74a} for a brief
historical account and more references), palindromic compositions
(see \cite{ji08a}), combinatorial number theory (see
\cite{cameron90a,lev01a}), and universal tree of minimum complexity
(see \cite{chung81a, goldberg68a});

\item[--] Probability: log-normal distribution (see
\cite{johnson94a}), renewal theory (see \cite{van-beek73a,
vardi81a}), and total positivity (see \cite{karlin96a});

\item[--] Algebra: commutative ring theory (see \cite{campbell99a}),
and semigroups (see \cite{kuzmin93a,reznykov06a,shneerson01a});

\item[--] Analysis: pantograph equations (see
\cite{iserles93a,kato71a}), eigenfunctions of operators (see
\cite{spiridonov95a}), geometric partial differential equations (see
\cite{demarchis10a}), and $q$-difference equations (see
\cite{adams31a,carmichael12a,di-vizio03a,
ramis92a,zhang99a,zhang12a}).
\end{itemize}
This list is not aimed to be complete
but to show to some extent the generality of the seemingly
uncommon scale $n^{c\log n}$; also it suggests the possibly
nontrivial connections between instances in various areas,
whose clarification in turn may lead to further development
of more useful tools such as those in this paper.

\bibliographystyle{abbrvnat}
\bibliography{MIS}
\end{document}